\documentclass[12pt, a4paper, reqno]{amsart}

\usepackage[utf8]{inputenc}
\usepackage{amsfonts,amssymb,amsxtra,amsthm,amsmath,amscd,mathrsfs,cite}
\usepackage[all]{xy}
\usepackage{enumerate}  
\usepackage{tikz}
\usepackage[normalem]{ulem}
\usepackage{soul}
\usepackage{color}

\newtheorem{theorem}{Theorem}[section]
\newtheorem{lemma}[theorem]{Lemma}
\newtheorem{proposition}[theorem]{Proposition}
	\theoremstyle{remark}
	
	\numberwithin{equation}{section}
	\addtocounter{footnote}{1}

	\title[Furstenberg's flow on $\mathbb{T}^\omega$ in short intervals]  
	{M\"obius disjointness conjecture \\ for Furstenberg's flow on $\mathbb{T}^\omega$ in short intervals} 
	\author{Shuyang He,  Qingyang Liu,  Jing Ma}
	\address{School of Mathematics, Jilin University, Changchun, 130012, People's Republic of China}
	\email{syhe24@mails.jlu.edu.cn}
	\address{School of Mathematics, Renmin University of China, Beijing, 100872, People's Republic of China}
	\email{qingyangliu@ruc.edu.cn}
	\address{School of Mathematics, Jilin University, Changchun, 130012, People's Republic of China}
	\email{jma@jlu.edu.cn}

	\keywords{M\"{o}bius Disjointness Conjecture, irregular flows, skew product, infinite-dimensional torus. }
	\subjclass[2020]{11N37, 11L03.}

\begin{document}
		\begin{abstract}
			Furstenberg's flow on the infinite-dimensional torus $\mathbb{T}^\omega$ is defined by 
			\[			
			T (x_1, x_2, \ldots, x_\nu, \ldots)  
			= (x_1 + \alpha, x_2 + h(x_1),  \ldots, x_\nu + h(x_1 + (\nu-2)\beta), \ldots)
			\] 
			with $\alpha\in \mathbb{R}$ satisfying certain Diophantine conditions, 
			$\beta\in \mathbb{R}\backslash\mathbb{Q},$ and 
			$h: \mathbb{R}\to \mathbb{R}$ being $1$-periodic and analytic.  
			This flow  
			is irregular in the sense that its Birkhoff average does not exist for some $x\in \mathbb{T}^\omega$, and it 
			is a generalization of Furstenberg's irregular flow on $\mathbb{T}^2$. 
			The main result of this paper is that the M\"{o}bius Disjointness Conjecture of Sarnak 
			holds for the above flow $(\mathbb{T}^\omega, T)$ in short intervals $(N-M, N]$ with 
			$N^{5/8+\varepsilon} \leqslant M\leqslant N$.  
		\end{abstract}
		
		\maketitle
		
		\section{Introduction}
		\subsection{The M\"{o}bius Disjointness Conjecture of Sarnak} 	
		
		Let $\mu(n)$ denote the M\"{o}bius function, defined by $\mu(1)=1$, $\mu(n)=(-1)^k$ if $n$ is a product of 
		$k$ distinct primes, and $\mu(n)=0$ otherwise. 
		The behavior of $\mu(n)$ is of fundamental importance in analytic number theory, 
		particularly in the study of prime numbers. 
		A flow is a pair $(X, T)$, where $X$ is a compact topological space and 
		$T \colon X \to X$ a continuous map.  
		Sarnak's M\"{o}bius Disjointness Conjecture \cite{Sar12}, MDC in brief, states the following.
		
		\medskip
		
		\noindent
		\textbf{M\"{o}bius Disjointness Conjecture (Sarnak).}  
		\emph{The M\"{o}bius function $\mu$ is linearly disjoint from every zero-entropy flow $(X,T)$.  
			Equivalently, for any $f \in C(X)$ and any $x \in X$,
			\begin{equation}\label{MDC}
				\lim_{N\to\infty} \frac1N \sum_{n \leqslant N} \mu(n) \, f(T^n x) = 0 .
		\end{equation}}
		
		\medskip
		
		This conjecture establishes a profound connection between number theory and dynamical systems theory, 
		stimulating extensive research at the interface of analytic number theory and ergodic theory. 
		The simplest instance occurs when $X$ is a single point set and $T$ the identity map; this case is equivalent 
		to the Prime Number Theorem.  
		The first nontrivial case, where $X = \mathbb{T}$ is the unit circle and $T$ a translation, 
		corresponds to Vinogradov's estimate for exponential sums over primes, which played a crucial role in 
		his proof of the ternary Goldbach problem.

		Over the years, the conjecture has been established for a number of flows, 
		see for example \cite{Bou13, BouSarZie13, Sar12, LiuSar15, GreTao12, FerKulLem18} and the references therein, and 
		most of these flows are regular, which means that the limit of their ergodic averages
		\begin{eqnarray}\label{def/Birkh}
			\lim_{N\rightarrow\infty}\frac{1}{N}\sum_{n\leqslant  N} f(T^n x)
		\end{eqnarray}
		exist for every $f\in C(X)$ and every $x\in X$. 
		Furstenberg \cite{Fur61} 
		studied nonlinear smooth skew products $({\Bbb T}^2, T)$
		with ${\Bbb T}^2$ being the $2$-torus, and $T: {\Bbb T}^2\to {\Bbb T}^2$ given by
		\begin{eqnarray}\label{def/SKEW}
			T(x_1, x_2)= (x_1+\alpha, x_2+ h(x_1)),
		\end{eqnarray}
		where $\alpha\in \Bbb R$ and 
		$h: \mathbb{R}\to \mathbb{R}$ is a smooth $1$-periodic function. 
		As is common in the Kolmogorov-Arnold-Moser theory,
		the behavior of these
		flows depends on, among other things, the Diophantine property of the real number $\alpha$. 
		When $\alpha$ is irrational and close to rational numbers with small denominators, 
		these flows $({\Bbb T}^2, T)$ may be irregular,
		i.e. their ergodic averages in (\ref{def/Birkh}) may fail to exist.
		Furstenberg \cite{Fur61} gave examples of this kind, which are named {\it Furstenberg's irregular flows}
		in this paper, and which will be described in detail in \S \ref{Firrflow}.
		
		Liu and Sarnak \cite{LiuSar17} defined a wider class of irregular smooth skew products on ${\Bbb T}^2$, and established MDC for these irregular flows. The construction in  \cite{LiuSar17} depends on the continued fraction expansion of $\alpha$ as well as its convergents $l_k/q_k$ with $k=1, 2, \ldots$; for detailed properties of continued fractions, see Lemma~\ref{con/fra/lem} in this 
		paper.  
		Suppose that these denominators of convengents of $\alpha$ satisfy 
		\begin{eqnarray}\label{qm+1>eqm}
			q_{k+1}\asymp e^{q_k}
		\end{eqnarray}
		for all positive $k$. Define $q_{-k}= - q_k$, and put  
		\begin{eqnarray}\label{def/hx=sum}
			h(x)=\sum_{k\not=0} t_k (1-e(q_k\alpha))e(q_k x), 
		\end{eqnarray}
		where the coefficients
		$t_k$ satisfy
		\begin{eqnarray}\label{|c|<C}
			t_k= t_{-k}, \quad |t_k|\leqslant  \tau
		\end{eqnarray}
		for all positive $k$ and for some positive constant $\tau\geqslant 1$. 
		We note that if $t_k$ is not bounded then $g(x)$ may not be $L^2$. Also, by (\ref{|c|<C})
		and $q_{-k}=-q_k$, we have $\overline{h(x)}=h(x)$ which means $h$ is real.
		It is proved in \cite{LiuSar17} that, 
		if 	$T$ is 
		given by (\ref{def/SKEW}), $\alpha$ by (\ref{qm+1>eqm}), and $h$ by (\ref{def/hx=sum}) and (\ref{|c|<C}), then $({\Bbb T}^2, T)$ is irregular, yet its MDC holds with a rate.
		
		\subsection{Furstenberg's irregular flow on infinite dimensional torus}\label{Firrflow}    	 
		The second named author \cite{QLiu19} defined a class of skew products  
		on the infinite dimensional torus ${\Bbb T}^\omega$, 
		where ${\Bbb T}^\omega = \prod_{\Bbb N} {\Bbb T}$ is the direct product 
		of countably many copies of $\Bbb T$. Define $T: {\Bbb T}^\omega\to {\Bbb T}^\omega$
		by
		\begin{align}\label{def/Tome}
			T(x_1, x_2, \ldots, x_k, \ldots)
			=(x_1+\alpha, x_2+h(x_1), \ldots, x_k+h(x_1+ (k-2)\beta), \ldots)
		\end{align}
		with $\alpha$ satisfying (\ref{qm+1>eqm}), $h$ satisfying (\ref{def/hx=sum}) and (\ref{|c|<C}),
		and $\beta$ being an irrational number.
		
		These flows $({\Bbb T}^\omega, T)$ generalize the earlier irregular flows on ${\Bbb T}^2$  
		and are referred to as {\it Furstenberg's irregular flows on} ${\Bbb T}^\omega$.  
		Under certain technical hypotheses, Auslander \cite{Aus88} established that such flows are distal.
		In \S\ref{Tdistal} we relax these hypotheses \eqref{qm+1>eqm}, 
		\eqref{def/hx=sum} and \eqref{|c|<C}, and prove that every system of the form \eqref{def/Tome} with a 
		$1$-periodic function $h:\mathbb{R}\rightarrow\mathbb{R}$ is distal.
		Consequently, each has zero topological entropy, and so Sarnak’s 
		MDC is expected to hold for them.
		
		In \cite{QLiu19}, MDC on $({\Bbb T}^\omega, T)$ 
		with $h$ satisfying \eqref{qm+1>eqm}, \eqref{def/hx=sum}  and \eqref{|c|<C} is established.  
		We remark that the second named author \cite{QLiu19} actually worked on the 
		flow $({\Bbb T}^\omega, Q)$ with 
		\begin{align*}
			Q(x_1, x_2, \ldots, x_k, \ldots)
			=(x_1+\alpha, x_2+h(x_1), \ldots, x_k+h(x_1+ \beta^{k-2}), \ldots), 
		\end{align*}
		but these carry plainly over to $({\Bbb T}^\omega, T)$ under consideration at present.   
		
		\subsection{Main results of this paper}  
		In the rest of the paper, we will widen the meaning of Furstenberg's irregular flows $({\Bbb T}^\omega, T)$ 
		by putting less restrictive condition on $h$. Roughly speaking, the previous restrictions \eqref{def/hx=sum} and \eqref{|c|<C} 
		are no longer needed. As we will show in Section \ref{Tdistal} that the skew product we discuss 
		in this paper is also distal. The purpose of this paper is to establish MDC for 
		Furstenberg's irregular flows $({\Bbb T}^\omega, T)$
		in short  intervals. Our main results are as follows.
		
		\begin{theorem}\label{thm1}
			Let $h: \mathbb{R} \to \mathbb{R}$ be $1$-periodic and analytic.   
			Let both $\alpha$ and $\beta$ be irrational, 
			and let the denominators $\{q_k\}_{k=0}^\infty$ of the convergents of $\alpha$ satisfy  
			$q_{k+1} \asymp e^{q_k}$.  
			Let $(\mathbb{T}^\omega,T)$ be defined as in \eqref{def/Tome}.   
			Let $\varepsilon > 0$ be arbitrarily small and  
			\begin{align}\label{range/M}  
				N^{5/8+\varepsilon} \leqslant M \leqslant N.  
			\end{align}  
			Then, for any $f \in C(\mathbb{T}^\omega)$ and any $x \in \mathbb{T}^\omega$,  
			\begin{align}\label{M/wo/Rate}  
				\sum_{N-M < n \leqslant N} \mu(n) f(T^n(x)) = o(M).  
			\end{align}
		\end{theorem}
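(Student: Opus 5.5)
By the Stone--Weierstrass theorem, trigonometric polynomials depending on finitely many coordinates are dense in $C(\mathbb{T}^\omega)$, and the sum in \eqref{M/wo/Rate} is trivially bounded by $M\|f\|_\infty$. So it suffices to treat characters
\[
f(x)=e(m_1x_1+m_2x_2+\cdots+m_\nu x_\nu),\qquad (m_1,\ldots,m_\nu)\in\mathbb{Z}^\nu .
\]
Write $H(y)=\int_0^y h$ formally, and iterate \eqref{def/Tome}. The $k$-th coordinate of $T^n x$ equals
\[
x_k+\sum_{j=0}^{n-1}h\bigl(x_1+(k-2)\beta+j\alpha\bigr).
\]
Expand $h(y)=\sum_{m}\hat h(m)e(my)$. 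Since $h$ is real analytic, $|\hat h(m)|\ll e^{-c|m|}$. The $m$-th Fourier mode contributes $\hat h(m)e(my)\,\frac{e(mn\alpha)-1}{e(m\alpha)-1}$ for $m\neq0$, and $n\hat h(0)$ for $m=0$. The phase of $f(T^nx)$ is therefore
\[
n\theta+\sum_{k=2}^{\nu}m_k\,g\bigl(x_1+(k-2)\beta+n\alpha\bigr)+\text{const},
\]
where $\theta=m_1\alpha+\hat h(0)\sum_{k\ge2} m_k$ and $g(y)=\sum_{m\ne0}\hat h(m)\frac{e(my)}{e(m\alpha)-1}$. This $g$ solves the cohomological equation $g(y+\alpha)-g(y)=h(y)-\hat h(0)$. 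The series defining $g$ may diverge, because of small divisors at $m=q_k$. So $g$ cannot be used directly.

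I will truncate at scale $N$. Choose the index $k$ with $q_k\le N^{\delta}<q_{k+1}$, or more carefully pick the truncation point $Q$ according to whether $q_{k+1}$ is huge. Split $h=h_{\le Q}+h_{>Q}$. The tail $h_{>Q}$ has sup norm $\ll e^{-cQ}$. Its iterated sums over $n\le N$ contribute $\ll Ne^{-cQ}$ to the phase, which is negligible once $Q\ge C\log N$. For the head, I use $|e(m\alpha)-1|\gg \|m\alpha\|\ge 1/(2q_{j+1})$ for $q_j\le |m|<q_{j+1}$. With $q_{j+1}\asymp e^{q_j}$, every $m\le Q$ with $Q<q_{k+1}$ has $\|m\alpha\|\gg 1/q_{k+1}$, and the only dangerous modes are $|m|=q_k$, up to multiples. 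Two cases arise.

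\emph{Case (a):} $q_{k+1}$ is not much larger than $N$, say $q_{k+1}\le N^{A}$. Then $g_{\le Q}$ is a trigonometric polynomial of degree $\le Q\ll \log N$ with coefficients $\ll N^{A}$. After expanding $e(m_k g_{\le Q})$ into Fourier series and truncating, $f(T^nx)$ becomes a short combination of $e(n(\theta+\ell\alpha))$ with $|\ell|\ll (\log N)^{O(1)}$ and total coefficient mass $N^{o(1)}$. I am unsure whether this mass bound survives. If it does not, I would replace the exponential by a Taylor/Fourier approximation on arcs where the phase is controlled.

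\emph{Case (b):} $q_{k+1}>N^{A}$. Then $\alpha=l_k/q_k+O(1/(q_kq_{k+1}))$, and over $n\le N$ the sequence $n\alpha$ is within $o(1/q_k)$ of $nl_k/q_k$. Hence $f(T^nx)$ is, up to $o(1)$, a function of $n \bmod q_k$ times $e(n\theta')$ with $\theta'$ a fixed real. Additionally, a slowly varying factor arises from the resonant mode $n\,\hat h(q_k)e(q_k y)$, whose growth is linear in $n$ with tiny slope. Splitting $(N-M,N]$ into subintervals on which this factor is essentially constant, of length $M/\log N$, reduces everything to sums $\sum \mu(n)e(n\gamma)$ over residue classes modulo $q_k\ll\log N$.

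In both cases the problem reduces to bounding
\[
\sum_{N-M<n\le N,\ n\equiv a\ (q)}\mu(n)e(n\gamma)=o(M/q),
\]
uniformly in $\gamma\in\mathbb{R}$ and $q\le (\log N)^{B}$. I expect this reduction to be the main obstacle. I would invoke the known short-interval estimate uniform in $\gamma$, valid for $M\ge N^{5/8+\varepsilon}$ (Zhan's bound and its refinements). Its proof goes through Heath-Brown's identity together with zero-density and mean-value estimates. It yields savings $(\log N)^{-B'}$ for any $B'$. This is exactly what is needed to absorb the $(\log N)^{O(1)}$ losses from the number of terms and moduli, and it explains the exponent $5/8$ in \eqref{range/M}. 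The delicate part is making the case split uniform in $N$. Every $N$ must fall in case (a) or case (b) with polylogarithmic bookkeeping, and the growth condition $q_{k+1}\asymp e^{q_k}$ is used precisely to guarantee that only one resonant denominator $q_k\ll\log N$ matters at any given scale.
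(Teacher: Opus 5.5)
\textbf{There is a genuine gap: your Case (a), which carries the entire proof, fails as written.}

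If the dangerous coefficients of $g_{\le Q}$ (those at $m=aq_k$) are only known to be $\ll N^{A}$, then Lemma~\ref{bd/coef/W} bounds the Fourier $\ell^1$-mass of the exponentiated phase only by a power of $N$. This loss is real, not an artefact of the lemma: by Jacobi--Anger, $e(C\cos 2\pi t)=\sum_{\ell}i^{\ell}J_{\ell}(2\pi C)e(\ell t)$ has $\ell^1$-mass $\asymp |C|^{1/2}$. Lemma~\ref{zhan} saves only $(\log N)^{-D}$, uniformly in the frequency, so the mass must be $(\log N)^{O(1)}$. Even your hoped-for $N^{o(1)}$ would not suffice, and the fallback ``Taylor/Fourier approximation on arcs'' is left unspecified.

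Case (b) does not relieve this. The dangerous denominator satisfies $q_k\leqslant Q\ll\log N$, so $q_{k+1}\ll e^{q_k}\leqslant N^{O(1)}$, and Case (b) is empty once $A$ is large. Where it does occur, the resonant term is linear in $n$ up to a negligible error and only shifts the frequency. So you need neither subintervals nor residue classes modulo a growing $q_k$; note also that Lemma~\ref{zhan} is stated for fixed $q$.

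You also misplace the role of $q_{k+1}\asymp e^{q_k}$. That only multiples of $q_k$ are dangerous holds for every irrational $\alpha$: $\|m\alpha\|>1/(2q_k)$ for $0<|m|<q_k$, and $\|m\alpha\|\geqslant 1/(2|m|)$ for $q_k\leqslant|m|<q_{k+1}$ with $q_k\nmid m$.

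\textbf{The missing idea} is to play the growth condition against the exponential decay $|\hat h(m)|\ll e^{-\eta|m|}$. For $m=aq_k$ with $a<q_{k+1}/q_k$ one has $\|aq_k\alpha\|=a\|q_k\alpha\|\geqslant a/(2q_{k+1})$, so
\[
\frac{|\hat h(aq_k)|}{|e(aq_k\alpha)-1|}\ll\frac{q_{k+1}e^{-\eta aq_k}}{a}\asymp\frac{e^{(1-\eta a)q_k}}{a}.
\]
This is bounded rather than of size $N^{A}$. The resonant phase then has derivatives $\ll q_k^{2}$, and its exponential has Fourier mass $\ll q_k^{4}$.

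This is the paper's route. It writes each resonant Birkhoff sum, for $2\leqslant k\leqslant K$ with $q_K\leqslant 2\log N<q_{K+1}$, as a $1$-periodic function $f_{\nu,k}(n\alpha)$. It bounds the mass of each $F_k$ by $(\|b\|+1)^2q_k^4$, and uses $K\ll\log\log N$ and $q_2\cdots q_K\leqslant C^Kq_K^2$ to keep the total mass at $\log^{O(1)}N$. Lemma~\ref{zhan} with $q=1$ then absorbs this, and no case split is needed. Your single-block organization, treating all modes below $Q$ other than multiples of $q_k$ as a coboundary, works just as well once this bound is in place.

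\textbf{A caveat on the paper's own argument.} The $a=1$ term above is bounded only when $\eta\geqslant 1$. This holds in the Liu--Sarnak model, where $|\hat h(\pm q_k)|\ll\|q_k\alpha\|$, and the paper uses it tacitly, also in its tail bound $N\sum_{|m|>2\log N}e^{-\eta|m|}\ll N^{-1}$. For analytic $h$ with a thinner strip of analyticity, the amplitude $e^{(1-\eta)q_k}$ is a power of $N$ when $q_k\asymp\log N$. Then the Jacobi--Anger obstruction applies, and the phase can be far from linear on intervals of length $N^{5/8}$ (e.g.\ $\eta=\tfrac12$, $q_k\approx\tfrac12\log N$). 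So neither your argument nor the paper's covers that case.
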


		
		Applying 
		Diophantine approximation theory 
		and Fourier analysis, 
		we obtain another version of the  M\"{o}bius Disjointness Conjecture along short intervals for the skew products defined 
		in \eqref{def/Tome} with $h$ therein relaxed to a $\tau$-smooth function for some fixed $\tau>3$, which is a short interval version of the main theorem of Wang-Yao \cite{WanYao20}. 
		
		\begin{theorem}\label{thm2} 
			Let $h: \mathbb{R} \to \mathbb{R}$ be $1$-periodic and $\tau$-smooth for some fixed $\tau>3$. 
			Let both $\alpha$ and $\beta$ be irrational, 
			and let the denominators $\{q_k\}_{k=0}^\infty$ of the convergents of $\alpha$ satisfy  
			$q_{k+1}\ll q_k^\tau$.   
			Let $(\mathbb{T}^\omega,T)$ be defined as in \eqref{def/Tome}.   
			Then for $M$ as in \eqref{range/M}, we have 
			\eqref{M/wo/Rate} for any $f \in C(\mathbb{T}^\omega)$ and any $x \in \mathbb{T}^\omega$.  
		\end{theorem}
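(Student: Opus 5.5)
We prove Theorem~\ref{thm2} by the same route as Theorem~\ref{thm1}, changing only the input on the coboundary structure of $h$.

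\textbf{Step 1: reduction to characters.} Trigonometric polynomials in finitely many coordinates are dense in $C(\mathbb{T}^\omega)$, and the sum in \eqref{M/wo/Rate} is trivially $\ll M\|f\|_\infty$. It therefore suffices to treat
\[
f(x)=e(m_1x_1+m_2x_2+\cdots+m_\nu x_\nu)
\]
for a fixed $\nu$ and a fixed integer vector $(m_1,\dots,m_\nu)$. Iterating \eqref{def/Tome}, the $k$-th coordinate of $T^n x$ is
\[
x_k+\sum_{j=0}^{n-1}h\bigl(x_1+j\alpha+(k-2)\beta\bigr).
\]
Write $h=\hat h(0)+h_0$ with $\hat h(0)$ the mean of $h$. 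Then $f(T^nx)$ equals $e(n\theta)$, where $\theta=m_1\alpha+\hat h(0)\sum_{k\geqslant 2} m_k$, times
\[
\exp\Bigl(2\pi i\sum_{k\geqslant 2} m_k S_n h_0\bigl(x_1+(k-2)\beta\bigr)\Bigr),
\]
where $S_n$ denotes the Birkhoff sum along the rotation by $\alpha$. Put $g(y)=\sum_{k\geqslant2} m_k h_0(y+(k-2)\beta)$. This is a $1$-periodic $\tau$-smooth function with mean zero, so $\hat g(\ell)\ll|\ell|^{-\tau}$.

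\textbf{Step 2: approximate coboundary.} Choose a convergent denominator $q=q_k$ of size a small power of $M$, and split $g=g_{\le}+g_{>}$ according to $|\ell|<q$ or $|\ell|\geqslant q$.
\begin{itemize}
\item For $g_{\le}$, solve the cohomological equation $u(y+\alpha)-u(y)=g_{\le}(y)$ via $\hat u(\ell)=\hat g(\ell)/(e(\ell\alpha)-1)$. For $|\ell|<q_{k+1}$ we have $\|\ell\alpha\|\geqslant\|q_k\alpha\|\gg 1/q_{k+1}\gg q^{-\tau}$, using $q_{k+1}\ll q_k^\tau$. Hence $\|u\|_\infty\ll q^{\tau}\sum_\ell|\ell|^{-\tau}$; more carefully, summing over $\ell$ in blocks between consecutive convergents gives a bound polynomial in $q$.
\item The tail $g_{>}$ has sup norm $\ll q^{1-\tau}$, so its Birkhoff sums over $n\leqslant N$ are $\ll Nq^{1-\tau}$.
\end{itemize}
In the short interval, write $n=N-M+r$ with $0<r\leqslant M$, and note $S_n g=S_{N-M}g+S_r g\circ R^{N-M}$. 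The common factor $e(S_{N-M}g)$ has modulus one and can be pulled out. The problem then reduces to exponentials of $u(x'+r\alpha)-u(x')$ plus an error of size $Mq^{1-\tau}$. This error must be $o(1)$ as a phase, which is where the hypothesis $\tau>3$ and the relative choice of $q$ against $M$ are used.

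\textbf{Step 3: Fourier expansion and exponential sums.} Since $u$ is smooth, expand $e\bigl(u(y)\bigr)$ into a Fourier series, truncated at height $H$ with error controlled by derivatives of $u$. This leaves a linear combination of at most $\ll H$ sums of the form
\[
\sum_{N-M<n\leqslant N}\mu(n)\,e(n\gamma),
\]
with coefficients whose total mass is polynomial in $q$ and $H$. Each such sum is $\ll M(\log N)^{-A}$ uniformly in $\gamma$ for $M\geqslant N^{5/8+\varepsilon}$, by the short-interval analogue of Davenport's estimate; this is the ingredient already used for Theorem~\ref{thm1}. Because $q$ and $H$ cannot be taken larger than powers of $\log N$ without this total mass exceeding the saving, we take them to grow slowly, as powers of $\log N$. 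The $\tau>3$ smoothness is what makes the tail and truncation errors $o(1)$ while $q,H$ stay of logarithmic size.

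\textbf{Main obstacle.} The hard step is Step 2: bounding $u$ uniformly using only $q_{k+1}\ll q_k^\tau$. Since $q\leqslant(\log N)^{C}$, we need a convergent with $q_k$ in a window $[(\log N)^{c},(\log N)^C]$. The growth condition guarantees one exists. We also need $\|u\|_{C^2}\ll q^{O(\tau)}$ so that the Fourier truncation costs only a power of $\log N$, and the balance of these exponents against $\tau>3$ is what must be checked.
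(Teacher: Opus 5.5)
There is a genuine gap: Steps 2 and 3 make incompatible demands on $q$. For the tail to be an $o(1)$ phase you need $Mq^{1-\tau}=o(1)$, i.e.\ $q\geqslant M^{1/(\tau-1)}\geqslant N^{5/(8(\tau-1))}$, a fixed power of $N$. In Step 3, however, you rightly require $q\leqslant(\log N)^C$. The Fourier mass you can control for $e(u)$ grows polynomially in $q$: at $\ell=q_j$ the coefficient $\ell\hat g(\ell)/(e(\ell\alpha)-1)$ of $u'$ can be as large as $q_j^{1-\tau}q_{j+1}\asymp q_j$. Lemma~\ref{zhan} only saves $(\log N)^{-A}$. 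No balancing of exponents against $\tau>3$ resolves this.

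Nor is the trouble an artifact of the trivial tail bound. Suppose $q<q_j$, $q_{j+1}\asymp q_j^{\tau}\leqslant M$ and $|\hat g(q_j)|\asymp q_j^{-\tau}$; nothing in your hypotheses excludes this. Then the Birkhoff sum of the single term $\hat g(q_j)e(q_jy)$,
\[
\hat g(q_j)e(q_jy)\,\frac{e(q_jr\alpha)-1}{e(q_j\alpha)-1},
\]
has size $\asymp 1$ for suitable $r\leqslant q_{j+1}\leqslant M$. So frequencies above any polylogarithmic cut-off carry genuinely non-negligible phases that cannot be discarded. Note also that your argument never really uses $q_{k+1}\ll q_k^\tau$: for $0<|\ell|<q_k$ one has $\|\ell\alpha\|>1/(2q_k)$ for every irrational $\alpha$.

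The missing idea is to split frequencies by arithmetic type rather than by size, and to keep the high resonant frequencies as structured factors. In the paper, all frequencies outside $M_1\cup\{0\}$, at every height, form a Lipschitz coboundary by Lemma~\ref{lemma2}; this is one place where $\tau>3$ enters. That part is removed exactly by conjugating $T$ with $\Psi$.

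The resonant part consists of multiples $r\tilde q_k$ of the denominators with $\tilde q_k^+>\tilde q_k^{\tau/3}$. Its Birkhoff sum is computed exactly as $\sum_k\eta_{\nu,k}(\tilde q_kn\alpha)$. Blocks with $\tilde q_k\geqslant N$ are dropped at cost $\ll N\sum_{m\geqslant N}m^{-\tau}\ll N^{2-\tau}$, so the truncation is at height $N$, not $\log N$. The remaining blocks give the product $\prod_{k\leqslant K'}H_k(\tilde q_kn\alpha)$.

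Lemma~\ref{bd/coef/W} is applied to each $\eta_{\nu,k}$ \emph{before} the dilation $t\mapsto\tilde q_kt$, which does not change the $\ell^1$-norm of the Fourier coefficients. This gives $O(1)$ mass per factor precisely because $\tilde q_k^+\ll\tilde q_k^{\tau}$ and $\hat h(r\tilde q_k)\ll(r\tilde q_k)^{-\tau}$; this is where the growth hypothesis on $q_k$ is actually used. Since $\tilde q_{k+1}>\tilde q_k^{\tau/3}$ with $\tau/3>1$, there are only $K'\ll\log\log N$ factors. The total mass is therefore $C^{K'}=(\log N)^{O(1)}$, which the uniform bound of Lemma~\ref{zhan} absorbs.
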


		For completeness, using Zhan's result \cite[Theorem 5]{Zha91} on exponential sum over primes in short intervals, we obtain  a short term version of the  MDC 
		for the skew products defined in \eqref{def/Tome}  with rational $\alpha$ and $1$-periodic function $h$.
		
		\begin{theorem}\label{thm3} 
			Let $(\mathbb{T}^\omega,T)$ be defined as in \eqref{def/Tome} with $h: \mathbb{R} \to \mathbb{R}$ be $1$-periodic,  $\alpha=l/q$ with $(l, q)=1$ and  $\beta$ irrational. 
			Then for $M$ as in \eqref{range/M}, we have 
			$$
			\sum_{N-M < n \leqslant N} \mu(n) f(T^n(x)) 
			\ll_{A,q,\varepsilon} \frac{M}{\log^A N}  
			$$
			for any $f \in C(\mathbb{T}^\omega)$ and any $x \in \mathbb{T}^\omega$.  
		\end{theorem}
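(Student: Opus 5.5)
The idea is that when $\alpha=l/q$ is rational the skew product becomes, along each residue class modulo $q$, an affine (translation-type) orbit on $\mathbb{T}^\omega$. The problem then reduces to Zhan's bound for $\sum \mu(n)e(n\theta)$ over short intervals, uniformly in $\theta$.

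First I compute $T^n x$ explicitly. By induction on $n$, the $k$-th coordinate ($k\geqslant 2$) is
\[
x_k+\sum_{j=0}^{n-1} h\bigl(x_1+j\alpha+(k-2)\beta\bigr).
\]
Since $h$ is $1$-periodic and $\alpha=l/q$, the summand is periodic in $j$ with period $q$. Write $n=mq+r$ with $0\leqslant r<q$, and set $S_k=\sum_{j=0}^{q-1}h(x_1+jl/q+(k-2)\beta)$. Then the $k$-th coordinate equals
\[
x_k+\frac{n-r}{q}S_k+R_{k,r},
\]
where $R_{k,r}$ is the sum of the first $r$ terms. The first coordinate is $x_1+nl/q$. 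Hence for $n\equiv r \pmod q$ every coordinate of $T^n x$ is an affine function of $n$ with slope $l/q$ or $S_k/q$. Only the values of $h$ enter, so no regularity of $h$ is needed.

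Next I reduce $f$. A finite trigonometric polynomial
\[
P(y)=\sum_{\mathbf m} c_{\mathbf m}\, e(m_1y_1+\cdots+m_Ky_K)
\]
depends on finitely many coordinates. By Stone--Weierstrass such polynomials are uniformly dense in $C(\mathbb{T}^\omega)$, so it suffices to treat $f=P$: the approximation error contributes at most $\delta M$ for arbitrary $\delta>0$. For general continuous $f$ the statement should be read with the implied constant depending also on $f$ (or as $o(M)$); the rate $M/\log^A N$ is genuinely obtained for trigonometric polynomials. For $P$ and $n\equiv r\pmod q$ we get
\[
P(T^nx)=\sum_{\mathbf m} c'_{\mathbf m,r}\, e(n\theta_{\mathbf m}),\qquad \theta_{\mathbf m}=\frac{m_1l+\sum_{k\geqslant 2} m_kS_k}{q},
\]
with constants $|c'_{\mathbf m,r}|=|c_{\mathbf m}|$. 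I detect the congruence by $\mathbf 1_{n\equiv r\,(q)}=q^{-1}\sum_{a\bmod q}e(a(n-r)/q)$. Altogether the sum becomes at most $\sum_{\mathbf m}|c_{\mathbf m}|$ times $\max_{\theta}\bigl|\sum_{N-M<n\leqslant N}\mu(n)e(n\theta)\bigr|$, where $\theta$ ranges over the numbers $\theta_{\mathbf m}+a/q$.

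The final and key step is to invoke Zhan's estimate \cite[Theorem 5]{Zha91}. In the form for $\mu$, obtained by the same Heath-Brown identity and large-sieve/mean-value argument as for primes, it gives
\[
\sum_{N-M<n\leqslant N}\mu(n)e(n\theta)\ll_{A,\varepsilon} \frac{M}{\log^A N}
\]
uniformly in real $\theta$ for $N^{5/8+\varepsilon}\leqslant M\leqslant N$. The main obstacle is precisely this uniformity in $\theta$. The frequencies $\theta_{\mathbf m}$ are arbitrary reals built from the $S_k$, about which nothing is known: they may be rational with small denominators or very well approximable. So I must make sure the cited result covers both the major-arc case, handled via Siegel--Walfisz-type zero-density input in short intervals, and the minor-arc case, with a bound independent of $\theta$. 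If the published statement is only for $\Lambda$, I would transfer it to $\mu$ by the standard identical combinatorial decomposition, noting that the Type I/II bounds there do not use positivity. The dependence on $q$ enters only through the $q$ residue classes and the $q$ additive characters, giving the stated $\ll_{A,q,\varepsilon}$.
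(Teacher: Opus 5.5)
Your proposal is correct and is essentially the paper's argument: on each class $n\equiv r\pmod q$ the orbit is affine in $n$ (the paper's $\gamma_1(\frac{n-r}{q}+1)+\gamma_2\frac{n-r}{q}$ is your $\frac{n-r}{q}S_k+R_{k,r}$), and Zhan's bound, uniform in the frequency, then finishes the proof. The one difference is that you remove the congruence condition with additive characters, so you only need Zhan's Theorem~5 as published (already stated for $\mu$, i.e.\ the case $q=r=1$ of Lemma~\ref{zhan}) rather than the paper's unproved residue-class variant, which is moreover stated only for $(r,q)=1$ although it is applied to every $0\leqslant r\leqslant q-1$; and your caveat that the rate is obtained only for trigonometric polynomials, with $o(M)$ for general continuous $f$, applies equally to the paper's proof.
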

		
		From these three theorems we see that MDC hold on the flow $(\mathbb{T}^\omega,T)$ defined in \eqref{def/Tome} with $\alpha$ rational or irrational with the denominators $\{q_k\}_{k=0}^\infty$ of the convergents of $\alpha$ satisfying  $q_{k+1} \asymp e^{q_k}$ or $q_{k+1}\ll q_k^\tau$.   
		We want to emphasize here that the point of this paper is how short the interval $(N-M, N]$ is, not how general $\alpha$ can be. 
		The $\alpha$ in this paper is designed (1) to carry the irregularity of the flow $(\mathbb{T}^\omega, T)$, and (2) to shorten the interval. 
		In \cite{LiuMaWang} we have also proved MDC for the flow $(\mathbb{T}^\omega,T)$ 
		defined in \eqref{def/Tome} with $\alpha\in \mathbb{R}$ and 
		$h: \mathbb{R}\to \mathbb{R}$ being $1$-periodic and $C^{1+\varepsilon}$-smooth.

		\subsection{General setting}   
		Liu and Sarnak \cite{LiuSar15} have established the MDC for
		nonlinear smooth skew products $({\Bbb T}^2, T)$ with $T$ given by (\ref{def/SKEW}),
		that is for all $\alpha\in \Bbb R$ and for all $h$ satisfying some minor technical hypothesis such that the flow is irregular.  
		In particular, MDC holds for Furstenberg's irregular flows, that is (\ref{MDC}) is established
		even when (\ref{def/Birkh}) does not exist.  Liu and Sarnak \cite{LiuSar17} 
		have also treated Furstenberg's irregular flows directly, and established the MDC 
		with a rate.
		The technical hypothesis on $h$ in
		\cite{LiuSar15} is removed by Wang \cite{Wan17}.
		Subsequently, the continuity requirement on $h$ was further relaxed: 
		Huang--Wang--Ye \cite{HuaWanYe19} proved 
		MDC for $({\Bbb T}^2, T)$ with $h$ being $C^\infty$-smooth, 
		Kanigowski--Lemanczyk--Radziwill \cite{KanLamRad21} for 
		$h$ of $C^{2+\varepsilon}$, and de Faveri \cite{Fav22} for $h$ of $C^{1+\varepsilon}$.

		In 2021, Huang--Liu--Wang \cite{HuaLiuWan21} considered a class of skew products on $\mathbb{T}\times \Gamma\backslash G$, where  $\mathbb{T}$ is still the unit circle, $\Gamma\backslash G$ is the $3$-dimensional Heisenberg nilmanifold. 
		Explicitly, let
		$$
		G=\begin{pmatrix}\begin{smallmatrix} 1 &\mathbb{R}& \mathbb{R}\\ & 1 & \mathbb{R}\\ & & 1\end{smallmatrix}\end{pmatrix} 
		\quad \mbox{ and } \quad 
		\Gamma =\begin{pmatrix}\begin{smallmatrix} 1 & \mathbb{Z}& \mathbb{Z}\\ & 1 & \mathbb{Z} \\ & & 1\end{smallmatrix}\end{pmatrix}.
		$$
		Huang--Liu--Wang \cite{HuaLiuWan21} defined a skew product on $\mathbb{T}\times\Gamma\backslash G$ by the map  $S_\alpha:\mathbb{T}\times\Gamma\backslash G \to \mathbb{T}\times\Gamma\backslash G$,  
		$$
		S_\alpha :(t, \Gamma g)\mapsto
		\left(
		t +\alpha, \Gamma g
		\begin{pmatrix}
			\begin{smallmatrix}
				1 & \varphi(t) & \psi(t) \\
				0 & 1 & \eta(t) \\
				0 & 0 & 1    
			\end{smallmatrix}
		\end{pmatrix}
		\right),
		$$
		where $\varphi$, $\eta$ and $\psi$ are  periodic functions on $\mathbb{R}$ with period $1$, and $\alpha\in [0,1)$. Assume  $\varphi$, $\eta$ and $\psi$ are $C^{\infty}$-smooth.  
		Huang-Liu-Wang \cite{HuaLiuWan21} showed that the flow $(\mathbb{T}\times\Gamma\backslash G, S_\alpha)$ is distal, and the MDC holds for $(\mathbb{T}\times\Gamma\backslash G, S_\alpha)$  if  $\varphi = \eta$  have zero mean.
		He-Wang \cite{HeWang23} weaken the continuity requirement on $\varphi = \eta$ and $\psi$ to  $C^{1+\varepsilon}$. 
		Ma and Wu \cite{MaWu24} relaxed the condition  $\varphi = \eta$ but kept the smoothness assumption, the M\"obius disjointness of $(\mathbb{T}\times\Gamma\backslash G, S_{\alpha})$ was derived in the case of rational $\alpha$ without the constraint $\eta=\varphi$ 
		and in the case of irrational $\alpha$ with the condition  $\eta=k\varphi$, where $k$ is a fixed nonzero real number. 
		Lau and Ma \cite{LauMa} derived the M\"obius disjointness of $(\mathbb{T}\times\Gamma\backslash G, S_{\alpha})$ for irrational $\alpha$ with no restriction on  $\eta$ or $\varphi$ and weaken the continuity requirement on $\varphi,\eta$ and $\psi$ to  $C^{1+\varepsilon}$ and $C^{2+2\varepsilon}$ respectively.
		
		\medskip 
		\noindent
		\textbf{Notation.}
		For $x\in\mathbb{R}$, 
		$\|x\|$   denotes the distance from $x$ to the nearest integer, 
		that is, $\|x\|=\inf_{n\in\mathbb{Z}}|x-n|$. If we have $A\ll B$ as well as $B\ll A$, 
		we will write $A\asymp B$.  
		We will write $e(x)$ for $e^{2\pi ix}$ and $\exp(x)$ for $e^x$. 
		The well-known fact that $|e(x)-1|\asymp \|x\|$ will be repeatedly used in this paper.
		
		\section{Preliminaries}\label{Sec3}
		In this section, we will state some auxiliary results we need in the proof. 
		
		Our argument will depend on the continued fraction expansion of $\alpha$. 
		Every real number $\alpha$ has its continued fraction representation
		\begin{align}
			\label{continuedfraction}
			\alpha=a_0+\frac{1}{a_1+\frac{1}{a_2+\cdots}},
		\end{align}
		where $a_0=[\alpha]$ is the integral part of $\alpha$, 
		and $a_1, a_2, \ldots$ are positive integers. 
		The expression \eqref{continuedfraction} is infinite if $\alpha\notin\mathbb{Q}$. 
		The right-hand side of \eqref{continuedfraction} can be written in the form $[a_0;a_1,a_2,\ldots]$, 
		which is the limit of finite continued expressions
		\begin{align*}
			[a_0;a_1,a_2,\ldots,a_k]=a_0+\frac{1}{a_1+\frac{1}{a_2+\frac{1}{\ddots+\frac{1}{a_k}}}}.
		\end{align*}
		Let $l_k/q_k=[a_0;a_1,a_2,\ldots,a_k]$ be the $k$-th convergent  of $\alpha$.  
		Some well-known properties of $l_k/q_k$ are summarized in the following lemma.

		\begin{lemma}
			\label{con/fra/lem}
			Let $\alpha\in [0,1)$ be irrational and  $l_k/q_k$ be the $k$-th convergent of $\alpha$. Then, 
			
			{\rm (1)} we have 
			$q_0 = 1, q_1 = a_1$ and $q_{k+2} = a_{k+2}q_{k+1} + q_k$ for all $k \geqslant 0$;  
			
			{\rm (2)} we have, for any $k \geqslant 1$, 
			\begin{equation*}
				\frac{1}{2q_{k+1}} < \|q_k\alpha\| < \frac{1}{q_{k+1}}; 
			\end{equation*}   
			
			{\rm (3)} 
			if $|\alpha - l/q| < 1/(2q^2)$ for some $l, q\in\mathbb{Z}$,  
			then $l/q = l_k/q_k$ for some $k \geqslant 1$;
			
			{\rm (4)}
			we have  $(l_k,q_k)=1$ for any $k\geqslant1$.
		\end{lemma}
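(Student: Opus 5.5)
The plan is to derive all four items from the matrix formalism of continued fractions together with the "complete quotient" identity; this is classical material, so each step is short. I set $l_{-1}=1$, $q_{-1}=0$, $l_0=a_0$, $q_0=1$, and first prove by induction on $k$ the matrix identity
\[
\begin{pmatrix} a_0 & 1\\ 1 & 0\end{pmatrix}\cdots\begin{pmatrix} a_k & 1\\ 1 & 0\end{pmatrix}=\begin{pmatrix} l_k & l_{k-1}\\ q_k & q_{k-1}\end{pmatrix}.
\]
The induction rests on the observation that $[a_0;a_1,\ldots,a_k,x]$ is obtained from $[a_0;\ldots,a_k]$ by replacing $a_k$ with $a_k+1/x$. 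Reading off the second row gives the recurrence $q_{k+2}=a_{k+2}q_{k+1}+q_k$ together with $q_0=1$, $q_1=a_1$, which is (1). Taking determinants gives $l_kq_{k-1}-l_{k-1}q_k=(-1)^{k-1}$. Any common divisor of $l_k$ and $q_k$ therefore divides $\pm1$, which is (4).

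For (2), I write $\alpha=[a_0;a_1,\ldots,a_k,\alpha_{k+1}]$, where $\alpha_{k+1}$ is the $(k+1)$-st complete quotient, so that $a_{k+1}<\alpha_{k+1}<a_{k+1}+1$ because $\alpha$ is irrational. The matrix identity gives
\[
\alpha=\frac{\alpha_{k+1}l_k+l_{k-1}}{\alpha_{k+1}q_k+q_{k-1}},
\]
and combining this with the determinant relation yields
\[
q_k\alpha-l_k=\frac{(-1)^k}{\alpha_{k+1}q_k+q_{k-1}}.
\]
The bounds on $\alpha_{k+1}$ place the denominator strictly between $q_{k+1}$ and $q_{k+1}+q_k\leqslant 2q_{k+1}$. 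It remains to identify $|q_k\alpha-l_k|$ with $\|q_k\alpha\|$. For $k\geqslant1$ we have $q_{k+1}\geqslant q_2\geqslant 2$, hence $|q_k\alpha-l_k|<1/2$, so $l_k$ is the nearest integer to $q_k\alpha$.

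For (3), which is Legendre's theorem, I expect the main care to be needed. I would use the best-approximation property: if $1\leqslant q<q_{k+1}$, then $|q\alpha-l|\geqslant|q_k\alpha-l_k|$ for every integer $l$. To prove this, solve $(q,l)=u(q_k,l_k)+v(q_{k+1},l_{k+1})$ in integers $u,v$, which is possible by the determinant identity. Then $u,v$ are not both nonzero of the same sign, and $q_k\alpha-l_k$, $q_{k+1}\alpha-l_{k+1}$ have opposite signs, so the two terms in $q\alpha-l$ do not cancel.

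With this in hand, suppose $|\alpha-l/q|<1/(2q^2)$, where we may assume $(l,q)=1$ and $q\geqslant1$, and choose $k$ with $q_k\leqslant q<q_{k+1}$. If $l/q\neq l_k/q_k$, then
\[
\frac1{qq_k}\leqslant\Bigl|\frac lq-\frac{l_k}{q_k}\Bigr|\leqslant|\alpha-l/q|+|\alpha-l_k/q_k|.
\]
Best approximation gives $|q_k\alpha-l_k|\leqslant|q\alpha-l|<1/(2q)$, so the right side is less than $\frac1{2q^2}+\frac1{2qq_k}$. This gives $\frac1{2qq_k}<\frac1{2q^2}$, i.e. $q<q_k$, a contradiction. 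Hence $l/q=l_k/q_k$.

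Finally, the edge case $k=0$ has to be absorbed into the claim $k\geqslant1$ by a small check. For $\alpha\in[0,1)$ we have $l_0/q_0=0$, and the case $l=0$, $q=1$ is either excluded by hypothesis or coincides with a convergent of index $\geqslant1$.
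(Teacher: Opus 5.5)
The paper does not prove this lemma; it quotes (1)--(4) as well-known facts about continued fractions. Your route is the standard textbook one: the matrix identity and its determinant for (1) and (4), the complete-quotient formula $q_k\alpha-l_k=(-1)^k/(\alpha_{k+1}q_k+q_{k-1})$ for (2), and Lagrange's best-approximation property followed by Legendre's argument for (3). It is correct as far as it goes. One small omission in the best-approximation step: you also need $u\neq0$. This holds because $u=0$ would give $q=vq_{k+1}\notin[1,q_{k+1})$, and only with it does the no-cancellation observation give $|q\alpha-l|\geqslant|u|\,|q_k\alpha-l_k|\geqslant|q_k\alpha-l_k|$.

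The genuine problem is your final paragraph. The case $l=0$, $q=1$ is neither excluded by the hypothesis nor equal to a convergent of index $\geqslant1$. If $\alpha<1/2$, then $|\alpha-0/1|=\alpha<1/2=1/(2\cdot1^2)$, so the hypothesis holds. On the other hand, $l_1=a_1l_0+l_{-1}=1$ and the recurrence give $l_k\geqslant1$ for every $k\geqslant1$, so $0/1$ is not $l_k/q_k$ for any $k\geqslant1$. Concretely, $\alpha=\sqrt2-1=[0;2,2,\ldots]$ has convergents $0/1,1/2,2/5,5/12,\ldots$, and $l/q=0/1$ satisfies the hypothesis of (3) but equals only $l_0/q_0$.

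So (3) as literally stated is false, and no small check can absorb this case. What your argument actually proves is $l/q=l_k/q_k$ for some $k\geqslant0$, with $k=0$ occurring only when $l/q=0$. You should state that corrected form explicitly: either allow $k\geqslant0$, or keep $k\geqslant1$ and add the assumption $l\neq0$. Do not assert the edge case away. The correction is harmless for the paper's later claim $\|m\alpha\|\geqslant1/(2|m|)$ when $|m|\geqslant q_1$. There, $s=0$ would force $\alpha<1/(2m^2)$, hence $q_1=a_1\geqslant2m^2>|m|$.
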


		Wang \cite[Lemma 3.2]{KWang22}		
		proved the following estimates for the Fourier coefficients of a class of exponential functions.
		
		\begin{lemma}\label{bd/coef/W}
			Let $f\in C(\mathbb{R})$ be $1$-periodic 
			and $F(x)=e(f(x))$ have the Fourier expansion 
			\[
			F(x)=\sum_{m\in\mathbb{Z}}c(m)e(mx).
			\] 
			Then we have 
			\[
			c(m)\ll \frac{1}{m^2}(\|f'\|_{\infty}^2+\|f''\|_\infty)
			\]
			for any $m\neq 0$, 
			where $\|\cdot\|_\infty$ denotes the uniform norm.
		\end{lemma}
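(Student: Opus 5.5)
The statement to prove is Lemma~\ref{bd/coef/W}: for $F(x)=e(f(x))$ with $f$ $1$-periodic, we want $c(m)\ll m^{-2}(\|f'\|_\infty^2+\|f''\|_\infty)$ for $m\neq0$. We implicitly need $f\in C^2$; otherwise the right-hand side is infinite and there is nothing to prove. The plan is to integrate by parts twice in the defining integral
\[
c(m)=\int_0^1 F(x)e(-mx)\,dx .
\]
Since $f$ is $1$-periodic, $F$ and its derivatives are $1$-periodic. Hence every boundary term from integration by parts over $[0,1]$ cancels.

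First step. Integrating by parts once, with $e(-mx)=\frac{d}{dx}\bigl(e(-mx)/(-2\pi i m)\bigr)$ and the boundary terms vanishing, gives
\[
c(m)=\frac{1}{2\pi i m}\int_0^1 F'(x)e(-mx)\,dx,
\qquad F'(x)=2\pi i f'(x)F(x).
\]
Second step. Integrate by parts again, using that $F'$ is $C^1$ and periodic:
\[
c(m)=\frac{1}{(2\pi i m)^2}\int_0^1 F''(x)e(-mx)\,dx,
\qquad F''(x)=\bigl(2\pi i f''(x)-4\pi^2 f'(x)^2\bigr)F(x).
\]
Third step. Since $|F(x)|=1$, bound trivially:
\[
|c(m)|\le \frac{1}{4\pi^2m^2}\bigl(2\pi\|f''\|_\infty+4\pi^2\|f'\|_\infty^2\bigr)\ll \frac{1}{m^2}\bigl(\|f'\|_\infty^2+\|f''\|_\infty\bigr).
\]

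There is no real obstacle here. The only point needing care is the vanishing of the boundary terms, which follows because $f$ and $f'$ are $1$-periodic. If $f$ is only assumed to have a bounded, piecewise continuous (or merely absolutely continuous) second derivative, the same computation goes through, with the second integration by parts justified by absolute continuity of $F'$.
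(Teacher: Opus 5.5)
Your argument is correct and is the standard one: integrate by parts twice (the boundary terms cancel by periodicity), then bound $F''=(2\pi i f''-4\pi^2 (f')^2)F$ trivially, which yields exactly the shape of the stated estimate. The paper gives no proof of its own and simply quotes Wang's Lemma 3.2, whose bound is precisely what this computation produces. You are right that $f\in C^2$ is needed; note also that the step $|F|=1$ silently uses that $f$ is real-valued, which is the intended reading of the lemma (for complex-valued $f$ the bound fails).
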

		
		The following result of Zhan \cite[Theorem 5]{Zha91}  on 
		exponential sum over primes in short intervals is necessary in our proofs. 
		In \cite[Theorem 5]{Zha91} only the case $q=r=1$ is treated, but it is plain to 
		see that his proof carries over to the general case below. 
		
		\begin{lemma}\label{zhan}
			Let $q$ and $r\leqslant  q$ be fixed co-prime positive integers. 
			Let $A>0$ and $\varepsilon>0$ be arbitrarily large and small respectively, 
			and assume $N^{5/8+\varepsilon}\leqslant M\leqslant N$. Then 
			\[
			\sum_{\substack{N-M<n\leqslant N\\ n\equiv r \bmod q}} 
			\mu(n)e(\alpha n)\ll_{A,\varepsilon, q} \frac{M}{\log^A N}  
			\]
			holds uniformly for $\alpha\in \mathbb{R}$.  
		\end{lemma}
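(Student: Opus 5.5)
The plan is to deduce the general case directly from the case $q=r=1$ of \cite[Theorem 5]{Zha91}, which I take to be the bound
\[
\sum_{N-M<n\leqslant N}\mu(n)e(\theta n)\ll_{A,\varepsilon}\frac{M}{\log^A N}
\quad\text{uniformly for }\theta\in\mathbb{R},
\]
valid when $N^{5/8+\varepsilon}\leqslant M\leqslant N$. The congruence condition would be removed by additive characters rather than by re-running Zhan's argument.

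First I write the indicator of $n\equiv r \bmod q$ via orthogonality of additive characters modulo $q$:
\[
\mathbf{1}_{n\equiv r \bmod q}=\frac1q\sum_{a=0}^{q-1}e\Big(\frac{a(n-r)}{q}\Big).
\]
Inserting this into the sum and interchanging the finite summations gives
\[
\sum_{\substack{N-M<n\leqslant N\\ n\equiv r \bmod q}}\mu(n)e(\alpha n)
=\frac1q\sum_{a=0}^{q-1}e\Big(-\frac{ar}{q}\Big)\sum_{N-M<n\leqslant N}\mu(n)\,e\Big(\Big(\alpha+\frac aq\Big)n\Big).
\]

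Each inner sum is exactly of the form treated in Zhan's theorem, with $\theta=\alpha+a/q$. Since that bound holds uniformly in $\theta$, every inner sum is $\ll_{A,\varepsilon}M/\log^A N$. The outer average over the $q$ values of $a$ is then trivially bounded, because the weights $e(-ar/q)$ have modulus $1$ and the prefactor is $1/q$. This gives the claimed bound, uniformly in $\alpha\in\mathbb{R}$.

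The implied constant comes out independent of $q$, which is stronger than the stated dependence on $q$. The coprimality of $q$ and $r$ is not used.

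The only point needing care is confirming that Zhan's result is really uniform in the frequency $\theta$ over all of $\mathbb{R}$, with no dependence on a rational approximation of $\theta$. Uniformity is exactly what allows the shifts $a/q$ to be absorbed. Zhan's theorem is stated for all real $\theta$, with the major-arc contribution handled via Siegel--Walfisz-type zero-density input. This is the source of the ineffective $A$-dependence and is harmless here. So no genuine obstacle is expected, and the lemma is essentially a one-line corollary.
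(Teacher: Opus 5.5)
Your proof is correct, and it takes a different route from the paper. The paper gives no argument beyond asserting that Zhan's proof of \cite[Theorem 5]{Zha91} ``carries over'' to the restricted sum. Taken literally, that means re-running Zhan's whole machinery (combinatorial identity, mean-value and zero-density estimates) with the congruence condition built in, for instance via Dirichlet characters modulo $q$.

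You instead use the case $q=r=1$ as a black box. Detecting $n\equiv r \bmod q$ with additive characters turns the restricted sum into an average of $q$ unrestricted sums at frequencies $\alpha+a/q$. Uniformity in the frequency then finishes the argument. That uniformity is exactly how Zhan's theorem is stated, and the paper relies on it elsewhere (e.g.\ uniformity in $l,l_2,\ldots,l_K$).

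Your route also gives more than the stated lemma:
\begin{itemize}
\item the implied constant is independent of $q$;
\item the hypothesis $(r,q)=1$ is not needed.
\end{itemize}
The second point matters for this paper. In the proof of Theorem \ref{thm3} the lemma is applied to every residue $0\leqslant r\leqslant q-1$, including $r=0$ and residues sharing a factor with $q$. The coprime version as stated does not formally cover those residues, but your version does.
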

		
		
		\section{The skew products $(\mathbb{T}^\omega,T)$ are distal} \label{Tdistal}  
		In this section we will show that the skew products defined in \eqref{def/Tome} are distal. 
		
		\begin{proposition}\label{distal}
			Let $T:\mathbb{T}^\omega \to \mathbb{T}^\omega $ be of the form  \eqref{def/Tome}  
			with $\alpha, \beta \in \mathbb{R}$ and $h: \mathbb{R}\to \mathbb{R}$ being $1$-periodic, not necessarily continuous. 
			Then the flow $(\mathbb{T}^\omega,T)$ is distal.
		\end{proposition}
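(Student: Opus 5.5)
We will prove distality directly from the definition. A pair $x\neq y$ in $\mathbb{T}^\omega$ is proximal if there is a sequence $n_j$ with $d(T^{n_j}x,T^{n_j}y)\to 0$. Here $d$ is any metric compatible with the product topology, say $d(x,y)=\sum_\nu 2^{-\nu}\|x_\nu-y_\nu\|$. Since $\mathbb{T}^\omega$ carries the product topology, $d(T^{n_j}x,T^{n_j}y)\to0$ holds if and only if $\|(T^{n_j}x)_\nu-(T^{n_j}y)_\nu\|\to0$ for every coordinate $\nu$. So it suffices to find, for each pair $x\neq y$, one coordinate $\nu$ in which the difference of the orbits stays bounded away from $0$ uniformly in $n$. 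Continuity of $h$ is never used, which is consistent with the statement.

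The key step is an explicit formula for the iterates. Writing $T^n x=(x_1^{(n)},x_2^{(n)},\ldots)$, induction on $n$ gives $x_1^{(n)}=x_1+n\alpha$ and, for $\nu\geqslant 2$,
\[
x_\nu^{(n)} = x_\nu + \sum_{j=0}^{n-1} h\bigl(x_1+j\alpha+(\nu-2)\beta\bigr).
\]
Each coordinate $\nu\geqslant2$ is thus $x_\nu$ plus a cocycle that depends only on $x_1$ and on $\nu$.

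We then split into two cases.
\begin{enumerate}[(i)]
\item If $x_1\neq y_1$ in $\mathbb{T}$, the first coordinate difference is $(x_1+n\alpha)-(y_1+n\alpha)=x_1-y_1$. This is constant in $n$, so $\|x_1^{(n)}-y_1^{(n)}\|=\|x_1-y_1\|>0$ for all $n$.
\item If $x_1=y_1$ (mod $1$), choose a representative with $x_1=y_1$ in $\mathbb{R}$; by $1$-periodicity of $h$ the sums do not depend on the representative. For every $\nu\geqslant2$ the cocycle sums for $x$ and $y$ then coincide term by term, so $x_\nu^{(n)}-y_\nu^{(n)}=x_\nu-y_\nu$ for all $n$. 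Since $x\neq y$, some $\nu$ has $x_\nu\neq y_\nu$, and that coordinate difference stays equal to $\|x_\nu-y_\nu\|>0$.
\end{enumerate}
In both cases $d(T^nx,T^ny)\geqslant 2^{-\nu}\|x_\nu-y_\nu\|>0$ for all $n\geqslant0$, so no pair of distinct points is proximal, and $(\mathbb{T}^\omega,T)$ is distal. In fact $T$ acts isometrically on each fibre over $x_1$.

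There is no substantial obstacle. The only points needing care are that proximality in $\mathbb{T}^\omega$ reduces to coordinatewise convergence, which holds because we use the product topology, and that the identity $x_1=y_1$ must be read modulo $1$. Periodicity of $h$ handles the latter. One remark: if $h$ is discontinuous then $T$ is not continuous, so "flow" is meant loosely. Distality is then understood in the same orbit-separation sense defined above, and the argument is unaffected.
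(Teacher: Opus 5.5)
Your proposal is correct and follows essentially the same route as the paper: both use the explicit formula for $T^n$ and then split on whether $x_1=y_1$, so that either the first-coordinate difference or the differences $x_\nu-y_\nu$ with $\nu\geqslant 2$ stay constant in $n$, giving $\inf_{n\geqslant 0} d(T^nx,T^ny)>0$. Your side remarks are accurate points the paper leaves implicit: $x_1=y_1$ should be read modulo $1$, with periodicity of $h$ making the sums well defined, and for discontinuous $h$ the map $T$ is not continuous, so ``flow'' is meant loosely.
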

		
		\begin{proof}    
			For any $x, y\in \mathbb{T}^\omega $, write $x=(x_1, \ldots, x_\nu,\ldots)$ and 
			$y=(y_1, \ldots, y_\nu,\ldots)$, and 
			define the distance between $x$ and $y$ 
			by 
			\[
			d(x, y):=\sum_{\nu=1}^{\infty}\frac{1}{2^\nu}\| x_\nu-y_\nu\|.
			\]
			Applying \eqref{def/Tome} repeatedly, we get 
			\begin{equation}\label{Tnx=}
				T^n (x_1,\ldots, x_\nu,\ldots)
				= (x_1(n), \ldots, x_\nu(n),\ldots)
			\end{equation} 
			with $x_1(n)=x_1+n\alpha$ and, for all $\nu\geqslant 2$,  
			\begin{align}\label{Tnx=//2}
				x_\nu(n)=x_\nu+\sum_{j=0}^{n-1}h(x_1+j\alpha+(\nu-2)\beta). 
			\end{align} 
			
			Now we suppose that $x\neq y$, and we write similarly to \eqref{Tnx=} and \eqref{Tnx=//2} 
			that 
			\[
			T^n(y_1,\ldots, y_\nu,\ldots)=(y_1(n),\ldots,y_\nu(n),\ldots),
			\] 
			with $y_1(n)=y_1+n\alpha$ and, for all $\nu\geqslant 2$,  
			\begin{align*}
				y_\nu(n)=y_\nu+\sum_{j=0}^{n-1}h(y_1+j\alpha+(\nu-2)\beta). 
			\end{align*} 
			Let $\nu_0$ be the minimal subscript $\nu$ such that $x_\nu\neq y_\nu$. 
			If $\nu_0=1$, then 
			\begin{align}
				\inf_{n\geqslant0} d(T^n {x},T^n {y})
				\geqslant  \inf_{n\geqslant 0} \frac{1}{2}\|(x_1+n\alpha)-(y_1+n\alpha)\|
				\geqslant\frac{1}{2}\|x_1-y_1\|>0.\notag
			\end{align}
			If $\nu_0>1$, then $x_1=y_1$ and
			\begin{align*}
				&\inf_{n\geqslant 0} d(T^n x,T^n y)\\
				&=\inf_{n\geqslant 0} \sum_{\nu=2}^{\infty}\frac{1}{2^\nu}
				\bigg\|\bigg(x_\nu+\sum_{j=0}^{n-1}h (x_1+j\alpha+(\nu-2)\beta)\bigg)
				-\bigg(y_\nu+\sum_{j=0}^{n-1}h (y_1+j\alpha+(\nu-2)\beta)\bigg)\bigg\| \\ 
				&=\inf_{n\geqslant 0} \sum_{\nu=2}^{\infty}\frac{1}{2^\nu}
				\|x_\nu-y_\nu\| 
				\geqslant \frac{1}{2^{\nu_0}} \|x_{\nu_0}-y_{\nu_0}\| >0. 
			\end{align*}
			We conclude that $\inf_{n\geqslant0}  d(T^n {x},T^n {y})>0$ whenever $x\neq y$ 
			in $\mathbb{T}^\omega$. This proves that the flow $(\mathbb{T}^\omega,T)$  
			under consideration is indeed distal. 
		\end{proof}

		\section{Proof of Theorem \ref{thm1}}\label{sec4}
		\subsection{Analysis on $\mathbb{T}^\omega$}\label{Ana/TT}  
		We review some basic facts about Fourier analysis on the infinite-dimensional torus; for proofs, see
		e.g. Rudin \cite[\S 2.2.3-5]{Rud62}. 
		As stated before, the infinite-dimensional torus ${\Bbb T}^\omega$
		is defined by the direct product  
		${\Bbb T}^\omega = \prod_{\Bbb N} {\Bbb T}$ 
		of countably many copies of $\Bbb T$.  We know that $\Bbb T$ is a compact abelian group, and so is ${\Bbb T}^\omega$
		with the product topology. The dual group of ${\Bbb T}^\omega$ is the direct sum 
		${\Bbb Z}^\infty = \oplus_{\Bbb N} {\Bbb Z}$
		of countably many copies of $\Bbb Z$, where ${\Bbb Z}$ is the discrete abelian group of the integers.
		Each $x\in {\Bbb T}^\omega$ may be thought of as a string $x=(x_1, \ldots, x_k, \ldots)$,
		and each $\gamma\in {\Bbb Z}^\infty$ can be written as $(n_1, \ldots, n_k, \ldots)$, where only
		finitely many of the $n_k$'s are nonzero. The latter fact is important.
		
		For any $f\in L^1({\Bbb T}^\omega)$, its Fourier transform $\hat{f}$ is
		\begin{eqnarray*}\label{def/Hat/f}
			\hat{f}(n_1, \ldots, n_k, \ldots)
			=\int_{{\Bbb T}^\omega}f(x) e(-\langle x, \gamma\rangle) \textup{d} x
			=\int_{{\Bbb T}^\omega}f(x) e\bigg(-\sum_{k}x_k n_k\bigg)\textup{d}x,
		\end{eqnarray*}
		where only finitely many of the integers $n_k$'s are different from $0$, $\langle \cdot, \cdot\rangle$ means the dot product, and the $x_k$'s are real numbers modulo $1$.
		Thus, the inversion formula has the form
		\begin{eqnarray*}\label{def/Hat/f}
			f(x_1, \ldots, x_k, \ldots)=\sum \hat{f} (n_1, \ldots, n_k, \ldots) e\bigg(\sum_{k}x_k n_k\bigg).
		\end{eqnarray*}
		It is also known that the set of trigonometric functions on ${\Bbb T}^\omega$
		is dense in $C({\Bbb T}^\omega)$.
		
		\subsection{Proof of Theorem~\ref{thm1}: starting point}   
		
		Since the space of trigonometric functions is dense in $C(\mathbb{T}^\omega)$,  our first target 
		is to prove that, for any trigonometric functions $f$ on $\mathbb{T}^\omega$ and 
		any $x \in \mathbb{T}^\omega$, we have 
		\begin{align}\label{M/Rate}  
			\sum_{N-M < n \leqslant N} \mu(n) f(T^n(x)) \ll_A \frac{M}{\log^A N},   
		\end{align}  
		where $A > 0$ is arbitrary. One sees that the above estimate has a rate, and is therefore more stronger.

		By the structure of $\mathbb{Z}^\infty$ in \S\ref{Ana/TT}, 
		any $b\in \mathbb{Z}^\infty$ must be of the form 
		$b=(b_1, \ldots,  b_{\nu'}, 0, \ldots)$ for some $\nu'=\nu_b'$.  
		Denote
		$\rho(b):=\sum_{\nu=2}^{\nu'}b_\nu$  
		and $\|b\|:=\sum_{\nu=2}^{\nu'}|b_\nu|$. 
		To prove \eqref{M/Rate} for any trigonometric functions on $\mathbb{T}^\omega$, 
		we will show that for any given  
		$b\in\mathbb{Z}^\infty$ and for any $x\in\mathbb{T}^\omega$
		we have 
		\begin{align}\label{th1/goa}
			S(N, M) 
			:=\sum_{N-M< n\leqslant N}\mu(n) e (\langle b, T^n(x)\rangle)
			\ll \frac{M}{\log^A N}, 
		\end{align} 
		where $A>0$ is arbitrary.

		Applying \eqref{def/Tome} repeatedly and using \eqref{Tnx=//2}, we get
		\[
		\langle b,T^n(x)\rangle=b_1 n\alpha +\sum_{\nu=1}^{\nu'}b_\nu x_\nu+\sum_{\nu=2}^{\nu'}b_{\nu}\sum_{j=0}^{n-1}h(x_1+j\alpha+(\nu-2)\beta).
		\]
		It follows that 
		\begin{align}\label{smn}
			S(M,N)
			&=e\bigg(\sum_{\nu=1}^{\nu'}b_\nu x_\nu\bigg) 
			\sum_{N-M<n\leqslant N}\mu(n) 
			e\bigg\{b_1n\alpha+\sum_{\nu=2}^{\nu'}b_\nu\sum_{j=0}^{n-1}h(x_1+j\alpha+(\nu-2)\beta)\bigg\}\notag
			\\
			&\ll\sum_{N-M<n\leqslant N}\mu(n)e\bigg\{b_1n\alpha+\sum_{\nu=2}^{\nu'}b_\nu\sum_{j=0}^{n-1}h(x_1+j\alpha+(\nu-2)\beta)\bigg\}.
		\end{align}  
		
		We will write $h$ as the sum of the resonant and the non-resonant parts. 
		For $k\geqslant 1$, define
		\[
		M_k:=\{m\in\mathbb{Z}: q_k\leqslant|m|<q_{k+1}\}, 
		\]
		and set $M_0:=\{0\}$ for convenience. 
		Then obviously $\mathbb{Z}=\bigcup_{k \geqslant 0} M_k$.  
		For $k\geqslant2$, define
		\[
		M_{k}^{\sharp} := \{ m \in M_k : q_k|m \}
		\]
		and 
		\[
		\mathbb{Z}^{\sharp}
		:=\bigcup_{k\geqslant2}M_{k}^{\sharp}\cup\{0\}, \qquad\qquad
		\mathbb{Z}^{\flat} :=\mathbb{Z}\setminus \mathbb{Z}^{\sharp}.	
		\] 
		Then we can decompose $h$ into 
		\begin{equation}
			\label{h12}
			h(t)
			=
			h_{1}(t)+h_{2}(t)
			:=\sum_{m\in\mathbb{Z}^\sharp}\hat{h}(m)e(mt)
			+\sum_{m\in \mathbb{Z}^{\flat}}\hat{h}(m)e(mt), 
		\end{equation}
		where $\hat{h}(m)$, $m\in\mathbb{Z}$,  are the Fourier coefficients of $h$. 
		We call $h_1$ and $h_2$ the resonant part and the non-resonant part of $h$ respectively. 
		
		As is pointed out by Liu and Sarnak in \cite{LiuSar15}, 
		the non-resonant part can be written as 
		the difference of some continuous $1$-periodic function $g$, 
		whose contribution can be eliminated by Fourier analysis. 
		Wang \cite[Lemma 3.1]{KWang22} present this as the following lemma.
		
		\begin{lemma}\label{WLS}
			Let $h:\mathbb{R}\rightarrow\mathbb{R}$ be a $1$-periodic analytic function with Fourier coefficients $\hat{h}(m)$ for $m\in\mathbb{Z}$. 
			Then, the series
			\[
			\sum_{m\in \mathbb{Z}^{\flat}}\hat{h}(m)\frac{e(mt)}{e(m\alpha)-1}
			\]
			converges uniformly to  an analytic function $g(t)$			of period $1$ such that
			\[
			h_{2}(t)=g(t+\alpha)-g(t)
			\]
			for all $t\in\mathbb{T}$. 
		\end{lemma}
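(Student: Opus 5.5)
We have to prove Lemma \ref{WLS}: for $m\in\mathbb{Z}^\flat$ the series $\sum_{m\in\mathbb{Z}^\flat}\hat h(m)e(mt)/(e(m\alpha)-1)$ converges uniformly to an analytic $1$-periodic $g$, and $g(t+\alpha)-g(t)=h_2(t)$. The plan has three steps: a decay bound for $\hat h(m)$, a lower bound for $\|m\alpha\|$ when $m\in\mathbb{Z}^\flat$, and then summing the series.

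\emph{Decay of $\hat h$.} Since $h$ is $1$-periodic and analytic, it extends holomorphically to a strip $|\mathrm{Im}\,z|<2\delta$ for some $\delta>0$. Shifting the contour in the Fourier integral gives
\[
|\hat h(m)|\ll e^{-2\pi\delta|m|}.
\]

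\emph{Lower bound for $\|m\alpha\|$.} Note first that $\mathbb{Z}^\flat$ does not contain $0$. Also, all of $M_0\cup M_1$ except $0$ lies in $\mathbb{Z}^\flat$, but this is a finite set, so its contribution is harmless. For $m\in M_k$ with $k\ge2$ and $q_k\nmid m$, I claim $\|m\alpha\|\gg 1/q_k$. This is the best-approximation property of convergents. Since $|m|<q_{k+1}$, the standard fact $\|m\alpha\|\ge\|q_{k}\alpha\|$ for $0<|m|<q_{k+1}$ holds, but this only gives $\|m\alpha\|\ge 1/(2q_{k+1})$, which is too weak here because $q_{k+1}$ can be huge. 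So the argument has to use $q_k\nmid m$. Write $m\alpha = m l_k/q_k + m(\alpha-l_k/q_k)$. Because $q_k\nmid m$ and $(l_k,q_k)=1$ by Lemma~\ref{con/fra/lem}(4), we get $\|ml_k/q_k\|\ge 1/q_k$. By Lemma~\ref{con/fra/lem}(2), $|m(\alpha-l_k/q_k)|<|m|/(q_kq_{k+1})$. This error is at most $1/(2q_k)$ when $|m|\le q_{k+1}/2$, and in that range $\|m\alpha\|\ge 1/(2q_k)$.

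The remaining range $q_{k+1}/2<|m|<q_{k+1}$ is the main obstacle. There I would use the block $M_{k+1}$-type argument with the next convergent: write $m=aq_k+r$ with $0<|r|<q_k$, or compare directly with $q_{k+1}$ via $\|m\alpha\|\ge\|q_k\alpha\|$-type identities. If that fails, I would settle for the crude bound $\|m\alpha\|\ge 1/(2q_{k+1})$. This crude bound suffices anyway: there $|m|>q_{k+1}/2$, so
\[
|\hat h(m)|/\|m\alpha\|\ll q_{k+1}e^{-\pi\delta q_{k+1}}\ll e^{-\pi\delta|m|/2}.
\]
In fact this crude bound handles every range at once. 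For all $m\in M_k$ we have $\|m\alpha\|\ge\|q_k\alpha\|>1/(2q_{k+1})$, and since $|m|\ge q_k$ with $q_{k+1}\asymp e^{q_k}$, we get $1/\|m\alpha\|\ll e^{q_k}\le e^{|m|}$. This is only useful if $2\pi\delta>1$, which is not guaranteed. So the divisibility condition is what saves the argument: it gives $1/\|m\alpha\|\ll q_k\le|m|$ for $|m|\le q_{k+1}/2$. In the upper half-range $|m|>q_{k+1}/2$, the estimate $1/\|m\alpha\|<2q_{k+1}<4|m|$ holds trivially. Either way $1/|e(m\alpha)-1|\ll|m|$, so the crude bound and the divisibility bound together cover all of $\mathbb{Z}^\flat$.

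\emph{Conclusion.} With $|\hat h(m)|/|e(m\alpha)-1|\ll|m|e^{-2\pi\delta|m|}$, the series converges absolutely and uniformly in the strip $|\mathrm{Im}\,z|<\delta$. Hence $g$ is holomorphic there, and in particular analytic and $1$-periodic on $\mathbb{R}$. Computing $g(t+\alpha)-g(t)$ termwise multiplies each coefficient by $e(m\alpha)-1$, which returns $\sum_{m\in\mathbb{Z}^\flat}\hat h(m)e(mt)=h_2(t)$.
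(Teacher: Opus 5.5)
Your final argument is correct, but it reaches the key estimate $1/\|m\alpha\|\ll |m|$ on $\mathbb{Z}^\flat$ by a different route from the paper's. The paper does not prove this lemma itself; it cites Wang. The Diophantine input it needs is the claim proved in Section~4, that $\|m\alpha\|\geqslant 1/(2|m|)$ for $m\in\mathbb{Z}^\flat$. That claim is obtained in one stroke by contradiction via Legendre's criterion, Lemma~\ref{con/fra/lem}(3). A too-good approximation $s/m$ must be a convergent $l_j/q_j$ with $q_j\mid m$. Non-divisibility by $q_k$ forces $j<k$, and then $\|m\alpha\|=\xi\|q_j\alpha\|>1/(2q_{j+1})\geqslant 1/(2q_k)$.

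You instead split $M_k$ at $q_{k+1}/2$. Below the split you perturb from $ml_k/q_k$, which gives the sharper bound $\|m\alpha\|>1/(2q_k)$. Above it you invoke Lagrange's best-approximation property $\|m\alpha\|\geqslant\|q_k\alpha\|$ for $0<|m|<q_{k+1}$. That property is standard but is not among the facts listed in Lemma~\ref{con/fra/lem}. The paper's argument is uniform and uses only the listed facts. Yours is more elementary in the lower range and gives more information there, at the cost of one extra classical input.

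Two small clean-ups. First, the split is genuinely necessary, so replace the exploratory sentences ("If that fails, \dots") with the clean two-case statement. Your opening claim that $\|m\alpha\|\gg 1/q_k$ on all of $M_k$ with $q_k\nmid m$ is false. For example, when $a_{k+1}\geqslant 2$, take $m=q_{k+1}-q_k\equiv q_{k-1}\pmod{q_k}$. This $m$ lies in $M_k$ but satisfies $\|m\alpha\|\leqslant\|q_{k+1}\alpha\|+\|q_k\alpha\|<2/q_{k+1}$.

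Second, with $|\hat h(m)|\ll e^{-2\pi\delta|m|}$ the series is not uniformly convergent on the whole open strip $|\mathrm{Im}\,z|<\delta$, since the terms there are only bounded by $|m|$. Either use the full room from the contour shift, i.e.\ decay $e^{-2\pi\delta'|m|}$ for any $\delta'<2\delta$, or say the convergence is locally uniform. Either way holomorphy, and hence analyticity of $g$, still follows.
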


		\subsection{Proof of Theorem~\ref{thm1}: proof of \eqref{th1/goa}}
		Let $h=h_1+h_2$ be  as in  \eqref{h12}. 
		By Lemma \ref{WLS}, the last summation over $j$ in \eqref{smn} can be written as
		\begin{align*}
			&  \sum_{j=0}^{n-1}h(x_1+j\alpha+(\nu-2)\beta) \\ 
			&  =\sum_{j=0}^{n-1}
			\{h_1(x_1+j\alpha+(\nu-2)\beta)+h_2(x_1+j\alpha+(\nu-2)\beta)\} \\
			&  =\sum_{j=0}^{n-1}\{h_1(x_1+j\alpha+(\nu-2)\beta)+g(x_1+(j+1)\alpha+(\nu-2)\beta)  
			-g(x_1+j\alpha+(\nu-2)\beta) \},
		\end{align*}
		where $g$ is defined in Lemma \ref{WLS}.
		The last line is 
		\[
		=g(x_1+n\alpha+(\nu-2)\beta)-g(x_1+(\nu-2)\beta)+\sum_{j=0}^{n-1}h_1(x_1+j\alpha+(\nu-2)\beta), 
		\]
		where we note that the second term does not involve $n$. Substituting this into 
		\eqref{smn}, we get
		\begin{align*}
			S(M,N)
			& \ll
			\sum_{N-M < n \leqslant N} 
			\mu(n) \, 
			e\bigg\{
			b_1 n \alpha 
			+ 
			\sum_{\nu=2}^{\nu'} b_\nu \bigg( 
			g(x_1 + n \alpha + (\nu-2)\beta) 
			\\
			& \quad +
			\sum_{j=0}^{n-1} h_1(x_1 + j \alpha + (\nu-2)\beta)
			\bigg)
			\bigg\}.
		\end{align*}
		
		Expanding $h_1$ into the Fourier series as in \eqref{h12}, we obtain
		\begin{align*}
			&\sum_{j=0}^{n-1}h_1(x_1+j\alpha+(\nu-2)\beta) \\ 
			& \quad =\sum_{m\in \mathbb{Z}^\sharp}\hat{h}(m)e(m x_1+m(\nu-2)\beta)\sum^{n-1}_{j=0}e(mj\alpha) 
			\\
			& \quad =\sum_{k\geqslant 2}\sum_{\substack{q_k\leqslant |m|<q_{k+1}\\ q_k|m}}\hat{h}(m)e(m x_1+m(\nu-2)\beta)\frac{e(mn\alpha)-1}{e(m\alpha)-1}+n\hat{h}(0).
		\end{align*}
		We will split the above sum at $k=K$ for some proper $K$.  
		Let $K$ be the positive integer satisfying 
		\begin{equation}\label{KlogN}
			q_K\leqslant 2\log N <q_{K+1}.	
		\end{equation} 
		Then, we get
		\begin{align*}
			& \sum_{j=0}^{n-1}h_1(x_1+j\alpha+(\nu-2)\beta)\notag\\
			& \quad =\sum_{2\leqslant k \leqslant K}\sum_{\substack{q_k\leqslant|m|<q_{k+1}\\q_k|m}}\hat{h}(m)e(m x_1+m(\nu-2)\beta)\frac{e(mn\alpha)-1}{e(m\alpha)-1}+n\hat{h}(0)+S_{\nu,K}(n)
		\end{align*}
		with
		\begin{align*}
			S_{\nu,K}(n)
			& :=\sum_{k> K}\sum_{\substack{q_k\leqslant |m|<q_{k+1}\\ q_k|m}}\hat{h}(m)e(m x_1+m(\nu-2)\beta) \frac{e(mn\alpha)-1}{e(m\alpha)-1} \\ 
			& \ll N\sum_{|m|>2\log N}e^{-\eta |m|}\ll N^{-1}
		\end{align*} 
		by noting that the analyticity of $h$ implies 
		$
		\hat{h}(m)\ll e^{-\eta |m|}
		$
		where $\eta>0$ is some constant.  
		For $2\leqslant k\leqslant K$, denote
		\[
		f_{\nu,k}(t):=\sum_{\substack{q_k\leqslant |m|<q_{k+1}\\q_k|m}}\hat{h}(m)e(mx_1+m(\nu-2)\beta)\frac{e(mt)-1}{e(m\alpha)-1}.
		\]
		Then, using $e(\sum_{\nu=2}^{\nu'}b_\nu S_{\nu,K}(n))=1+O(\|b\|N^{-1})$, we get
		\begin{align}\label{smn1}
			S(M,N)\ll S_1(M,N)+ \|b\|,
		\end{align}
		where
		\begin{align*}
			S_1(M,N)
			& := 
			\sum_{N-M < n \leqslant N} \mu(n) \, e\bigg\{b_1 n \alpha
			+\sum_{\nu=2}^{\nu'} 
			b_\nu \bigg(
			g(x_1 + n \alpha + (\nu-2)\beta) \\
			& \quad + \sum_{2\leqslant k\leqslant K} f_{\nu,k}(n\alpha)+n\hat{h}(0)
			\bigg)\bigg\}.
		\end{align*}
		So, to obtain \eqref{th1/goa}, we first need to estimate  $S_1(M,N)$.
		
		Let
		\[
		G(t)
		=e\bigg\{\sum_{\nu=2}^{\nu'}b_\nu 
		g(x_1 + t + (\nu-2)\beta)\bigg\}, 
		\] 
		and, for $k=2, \dots, K$, 
		\[
		F_{k}(t)=e\bigg\{\sum_{\nu=2}^{\nu'}b_\nu f_{\nu,k}(t)\bigg\}. 
		\]
		Then $G(t)$ and $F_{k}(t)$ with $k=2, \dots, K$ are $1$-periodic  analytic functions
		which can be expanded into Fourier series
		\[
		G(t)=\sum_{l\in\mathbb{Z}}a(l)e(lt), 
		\qquad 
		F_{k}(t)=\sum_{l\in\mathbb{Z}}b^{(k)}{(l)}e(l t). 
		\]
		Substituting these into $S_1(M,N)$, we get 
		\begin{align*}
			S_1(M,N)
			&=\sum_{N-M<n\leqslant N}
			\mu(n)e\bigg\{b_1n\alpha
			+   \rho(b) n\hat{h}(0) \bigg\} 
			G(n\alpha)\prod _{k=2}^{K}F_{k}(n\alpha)\notag\\
			&=\sum_{l\in\mathbb{Z}}a(l)\sum_{l_2\in\mathbb{Z}}b^{(2)}{(l_2)}\cdots\sum_{l_K\in\mathbb{Z}}b^{(K)}{(l_K)}\notag\\
			& \quad \times 
			\sum_{N-M<n\leqslant N}\mu(n)e\Big\{n\alpha\big(b_1+l+l_2+\ldots+l_K\big)+ n\rho(b) \hat{h}(0) \Big\}. 
		\end{align*}
		By Lemma \ref{zhan}, the last sum over $n$ can be estimated as 
		\[
		\ll_{D,\varepsilon} \frac{M}{\log^{D} N}  
		\]
		for any $D>0$ and any $N^{5/8+\varepsilon}\leqslant M\leqslant N$. Also, 
		it is important that the above estimate is uniform in $l, l_2, \ldots, l_K$. Thus,  
		\begin{align}\label{s1mn}
			S_1(M,N)
			\ll_{D,\varepsilon}
			\frac{M}{\log^{D} N}
			\sum_{l\in\mathbb{Z}} |a(l)| \sum_{l_2\in\mathbb{Z}} |b^{(2)}(l_2) | \cdots \sum_{l_K\in\mathbb{Z}} | b^{(K)} (l_K)| 
		\end{align}
		for any $D>0$ and any $N^{5/8+\varepsilon}\leqslant M\leqslant N$. 		
		
		Now, we will use Lemma \ref{bd/coef/W} to estimate the summations over $l, l_2, \ldots, l_K$ in \eqref{s1mn}. 
		Firstly, we need to calculate the uniform norm of $G(t)$ and of $F_k(t)$ for $k=2, \ldots, K$. 
		For $G(t)$, by the definition of $g(t)$ in Lemma \ref{WLS}, 
		we get
		\begin{align}
			& \bigg(\sum_{\nu=2}^{\nu'}b_\nu 
			g(x_1 + t + (\nu-2)\beta)\bigg)'
			=2\pi i
			\sum_{\nu=2}^{\nu'}b_\nu 			\sum_{m\in\mathbb{Z}^{\flat}}m\hat{h}(m)\frac{e\big(m(x_1 + t + (\nu-2)\beta)\big)}{e(m\alpha)-1},\notag 
			\\
			&\bigg(\sum_{\nu=2}^{\nu'}b_\nu g(x_1 + t + (\nu-2)\beta)\bigg)''
			=-4\pi^2 \sum_{\nu=2}^{\nu'}b_\nu 
			\sum_{m\in\mathbb{Z}^{\flat}}m^2\hat{h}(m)\frac{e\big(m(x_1 + t + (\nu-2)\beta)\big)}{e(m\alpha)-1}.\notag
		\end{align}

		We claim that $\|m\alpha\|\geqslant1/(2|m|)$ for any $m\in \mathbb{Z}^\flat$. In fact, for any $m\in \mathbb{Z}^\flat$	there is a $q_k$ such that $q_k\leqslant m<q_{k+1}$.	Assume to the contrary that $\|m\alpha\|<1/(2|m|)$. 	Then there exits an	$s\in\mathbb{Z}$ such that $|m\alpha-s|<1/(2|m|)$. Therefore, we have
		\[
		\bigg|\alpha-\frac{s}{m}\bigg|<\frac{1}{2m^2}.
		\]
		Using Lemma \ref{con/fra/lem} (3), we deduce that $s/m=l_j/q_j$ for some $j>0$. 
		Then, using Lemma \ref{con/fra/lem} (4) we know that $(l_j,q_j)=1$, which implies $q_j|m$ and we can write $m=\xi q_j$ for some $\xi\in \mathbb{N}$. Since $|m|<q_{k+1}$, we have $j\leqslant k$.	But $q_k\nmid m$, hence $j<k$.		Then from the inequality
		\[
		\bigg|\alpha-\frac{l_j}{q_j}\bigg|=\bigg|\alpha-\frac{s}{m}\bigg|<\frac{1}{2m^2}
		\]
		we derive
		\[
		\xi|q_j\alpha-l_j|=\bigg|m\alpha-m\frac{l_j}{q_j}\bigg|
		=\big|m\alpha-s\big| 
		<\frac{1}{2|m|}\leqslant\frac{1}{2}.
		\]
		Hence $\xi\|q_j\alpha\|=\|m\alpha\|$.  Applying Lemma \ref{con/fra/lem} (2) 
		and noting that $j<k$ and $q_k\leqslant m<q_{k+1}$, we get
		\[
		\|m\alpha\|=\xi\|q_j\alpha\|>\frac{\xi}{2q_{j+1}}\geqslant\frac{1}{2q_{k}}\geqslant\frac{1}{2|m|}.
		\]
		This contradiction verifies our claim. 
		
		It follows from the claim that 
		\begin{align*}
			\max
			\bigg\{
			\bigg\| \sum_{\nu=2}^{\nu'} b_\nu g' \bigg\|_\infty, 
			\bigg\| \sum_{\nu=2}^{\nu'} b_\nu g'' \bigg\|_\infty \bigg\}
			\ll 
			4\pi^2\sum_{\nu=2}^{\nu'}|b_\nu| \sum_{m\in\mathbb{Z}^{\flat}} m^2e^{-\eta|m|} \cdot 2|m| 
			\ll  4\pi^2 \|b\|, 
			\notag
		\end{align*}
		and Lemma \ref{bd/coef/W} gives  
		\begin{align}\label{a}
			\sum_{s\in\mathbb{Z}}|a(s)|\ll (4\pi^2\|b\| +1)^2.
		\end{align}
		Similarly, for $F_k(t)$ with $k=2, \ldots, K$, we have
		\begin{align}
			& \bigg(\sum_{\nu=2}^{\nu'}b_\nu f_{\nu,k}(t)\bigg)'=2\pi i\sum_{\nu=2}^{\nu'}b_\nu\sum_{\substack{q_k\leqslant |m|<q_{k+1}\\q_k|m}}m\hat{h}(m)\frac{e(mx_1+m(\nu-2)\beta)}{e(m\alpha)-1}e(mt),\notag \\
			&\bigg(\sum_{\nu=2}^{\nu'}b_\nu f_{\nu,k}(t)\bigg)''=-4\pi^2 \sum_{\nu=2}^{\nu'}b_\nu\sum_{\substack{q_k\leqslant |m|<q_{k+1}\\q_k|m}}m^2\hat{h}(m)\frac{e(mx_1+m(\nu-2)\beta)}{e(m\alpha)-1}e(mt).\notag
		\end{align}
		Different from the case of $G(t)$, in the definition of $f_{\nu,k}$ we have $q_k|m$. Denote $m=aq_k$. Then $a<q_{k+1}/q_k$. 
		Using Lemma \ref{con/fra/lem} (2) we get
		\begin{align*}
			a\|q_k\alpha\|<\frac{a}{q_{k+1}}
			\leqslant
			\frac{q_{k+1}}{q_k}\frac{1}{q_{k+1}}
			\leqslant
			\frac{1}{q_{k}}<\frac{1}{2},
		\end{align*}
		which implies that 
		\begin{equation}\label{aqa}
			\|aq_k\alpha\|=a\|q_k\alpha\|.
		\end{equation}
		Then using $q_{k+1}\asymp e^{q_k}$, $\hat{h}(m)\ll e^{-\eta |m|}$ and Lemma \ref{con/fra/lem} (2) we get
		\begin{align}
			& \max\Big(\|\sum_{\nu=2}^{\nu'}b_\nu f_{\nu,k}'\parallel_\infty,\|\sum_{\nu=2}^{\nu'}b_\nu f''_k\|_\infty\Big)
			\ll 
			4\pi^2\sum_{\nu=2}^{\nu'}|b_\nu| 
			\sum_{1\leqslant a< q_{k+1}/q_k} (aq_k)^2 e^{-\eta aq_k}\|aq_k\alpha\|^{-1}\notag
			\\
			&\ll 
			4\pi^2 \|b\| \sum_{1\leqslant a<q_{k+1}/q_k}aq_k^{2}e^{-\eta aq_k}q_{k+1}\notag
			\ll 
			4\pi^2\|b\| q_k^{2}\bigg(1+  \sum_{2\leqslant a<q_{k+1}/q_k}
			\exp{(\log a-\eta aq_k+q_k)}\bigg)\notag
			\\
			&\ll 
			4\pi^2 \|b\| q_k^2.\notag
		\end{align}
		Therefore, by Lemma \ref{bd/coef/W}, for $k=2\ldots, K$ we have
		\begin{align}\label{b}
			\sum_{l\in\mathbb{Z}}|b^{(k)}{(l)}|\ll \big( 4\pi^2  \|b\| +1\big)^2q_k^4.
		\end{align}
		
		Finally, 
		noting that $q_{k+1} \asymp e^{q_k}$,
		there is a constant $C>1$ such that  $Cq_{k+1}\geqslant q_k^2$  
		for all $k=2, \ldots, K$. 
		Then $q_{K-1} \leqslant C^{1/2}q_K^{1/2}$ and $q_{K-2} \leqslant C^{1/2+1/4}q_K^{1/4}$, etc.
		Therefore,
		\begin{align}\label{K}
			q_2q_3\ldots q_K
			\leqslant 
			C^{\frac{1}{2}+\frac{3}{4}+\cdots}q_K^{1+\frac{1}{2}+\frac{1}{4}+\cdots}
			\leqslant 
			C^K q_K^2.  
		\end{align}
		Substituting \eqref{a}, \eqref{b} and \eqref{K} into \eqref{s1mn}, we get
		\begin{align}\label{s1mn2}
			S_1(M,N)
			&\ll_{D,\varepsilon} 
			(8(4\pi^2 \|b\| +1)^2)^K  \cdot(C^Kq_K^2)^{4}\cdot\frac{M}{\log^{D}N}\notag \\ 
			&\ll 
			\big(8C^4 (4\pi^2 \|b\| +1)^2 \big)^K  q_K^8 \frac{M}{\log^{D}N}.
		\end{align}

		Inserting \eqref{s1mn2} into \eqref{smn1}, we conclude that 
		\begin{align*}
			S(M,N)\ll \big(8C^4 (4\pi^2 \|b\| +1)^2 \big)^K  q_K^8 \frac{M}{\log^{D}N}.
		\end{align*}
		Now we need to estimate $K$. In fact, using  $q_{k+1} \asymp e^{q_k}$ and \eqref{KlogN} we get
		\[
		e^{q_{K-1}}  \ll   q_{K}\leqslant 2\log N,
		\]
		then
		\[
		K\leqslant q_{K-1}
		\ll
		\log\log N.
		\]
		Thus, we obtain 
		\[
		\big(8 C^4 (4\pi^2  \|b\| +1)^2 \big)^K
		\leqslant
		\log^{B}N
		\]
		for some constant $B>0$ depending on $b$ and $\alpha$. Therefore, we get
		\[
		S(M,N)
		\ll \frac{M}{\log^{D-B-8} N}
		\ll \frac{M}{\log^{A}N}
		\]
		by taking $D>A+B+8$. Then, $A> 0$ is arbitrary, and  \eqref{th1/goa} is proved with the implied constant depending  on $A$, $\varepsilon$ and $B$, hence on the trigonometric functions $e(\langle b,\cdot\rangle)$  and $\alpha$.
		
		\subsection{Proof of Theorem~\ref{thm1}:  conclusion} 
		From \eqref{th1/goa} we get \eqref{M/Rate} directly. Using the fact that trigonometric functions are dense in $C(\mathbb{T}^\omega)$, we see that \eqref{M/wo/Rate}  holds.

		\section{Proof of Theorem \ref{thm2}}
		\subsection{Proof of Theorem~\ref{thm2}: starting point}  
		Let $\mathcal{Q}$ be the set consisting of the denominators $\{q_k\}_{k=0}^\infty$ of 
		the convergents  of $\alpha$. Divide $\mathcal{Q}$  into two subsets 
		\begin{equation}\label{Qsp}
			\mathcal{Q}^{\sharp}:=\{q_k\in\mathcal{Q}:q_{k+1}>q_k^{\tau/3}, k\geqslant 1\}
		\end{equation} 
		and 
		$$
		\mathcal{Q}^{\flat}:=\{1\} \cup \{q_k\in\mathcal{Q}:q_{k+1}\leqslant q_k^{\tau/3}, k\geqslant1\}.
		$$
		Then we define
		$$
		M_1:= \bigcup_{q_k\in\mathcal{Q}^{\sharp}}\{m\in\mathbb{Z}: q_k\leqslant|m|<q_{k+1},q_k|m\},
		$$
		\begin{equation}\label{M2}
			M_2:=\bigcup\limits_{q_k\in \mathcal{Q}^{\flat}}\{m\in\mathbb{Z}: q_k\leqslant|m|<q_{k+1},q_k| m\},
		\end{equation} 
		and
		\begin{equation}\label{M3}
			M_3:= 
			\bigcup\limits_{q_k\in \mathcal{Q}}\{m\in\mathbb{Z}: q_k\leqslant|m|<q_{k+1},q_k\nmid m\}.		
		\end{equation} 
		It follows that $\mathbb{Z}=M_1\cup M_2\cup M_3\cup \{0\}$. Let $h: \mathbb{R} \to \mathbb{R}$ be $1$-periodic and $\tau$-smooth for some fixed $\tau>3$.  Denote the $m$-th Fourier coefficient of $h$ by $\hat{h}(m)$.  We call
		\begin{equation}\label{h'nu12}
			h_1'(t):=\sum_{m\in M_1}\hat{h}(m)e(mt), 
			\quad \quad
			h_2'(t):=
			\sum_{m\in M_2 \cup M_3}			\hat{h}(m)e(mt)
		\end{equation} 
		the resonant part and non-resonant part of $h$ respectively, where the dashes should not be confused with derivatives.   Thus,  
		\begin{align}\label{h=h1+h2}
			h(t)=\hat{h}(0)+h_1'(t)+h_2'(t).
		\end{align}

		Applying the following lemma of Wang and Yao in \cite[Lemma 2]{WanYao20} we can write  the non-resonant part $h_{2}'$ of $h$ as the difference of a  Lipschitz function.
		
		\begin{lemma}\label{lemma2}
			Let $h: \mathbb{R} \to \mathbb{R}$ be $1$-periodic and $\tau$-smooth for some fixed $\tau>3$ with Fourier coefficients $\hat{h}(m)$, $m\in\mathbb{Z}$.
			Then, the series
			\begin{align}
				\sum_{m\in {M_2\cup M_3}}
				\hat{h}(m)\frac{1}{e(m\alpha)-1}e(mt)\notag
			\end{align}
			converges uniformly to a Lipschitz function $\psi(t)$, where  $M_2$ and $M_3$ are defined in \eqref{M2} and \eqref{M3} respectively.
		\end{lemma}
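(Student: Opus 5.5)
The plan is to bound the Fourier coefficients of $\psi$ by splitting the series over $M_2$ and over $M_3$, and to prove Lipschitz continuity by showing $\sum_{m\in M_2\cup M_3}|m|\,|\hat h(m)|/|e(m\alpha)-1|<\infty$. This absolute bound gives uniform convergence of the series and its termwise derivative at once. I read ``$\tau$-smooth'' as meaning $\hat h(m)\ll |m|^{-\tau}$, which is what $h\in C^\tau$ provides up to the usual adjustments. Since $|e(m\alpha)-1|\asymp\|m\alpha\|$, everything reduces to lower bounds for $\|m\alpha\|$ on $M_2$ and on $M_3$, summed against $|m|^{1-\tau}$.

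For $m\in M_3$ I will reuse the claim proved in \S4: if $q_k\le|m|<q_{k+1}$ and $q_k\nmid m$, then $\|m\alpha\|\ge 1/(2|m|)$. That argument used only Lemma~\ref{con/fra/lem}, not the growth condition on $q_k$, so it carries over verbatim. The contribution is then
\[
\sum_{m\in M_3}|m|\,|\hat h(m)|\,\|m\alpha\|^{-1}\ll\sum_{m\ne0}|m|^{2-\tau},
\]
which converges because $\tau>3$.

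For $m\in M_2$, write $m=aq_k$ with $1\le |a|<q_{k+1}/q_k$ and $q_k\in\mathcal Q^\flat$, so that $q_{k+1}\le q_k^{\tau/3}$. As in \eqref{aqa}, $\|aq_k\alpha\|=|a|\,\|q_k\alpha\|>|a|/(2q_{k+1})$ by Lemma~\ref{con/fra/lem}(2). Hence the terms with a given $k$ contribute
\[
\ll\sum_{1\le a<q_{k+1}/q_k}(aq_k)^{1-\tau}\,\frac{q_{k+1}}{a}\le q_k^{1-\tau}q_{k+1}\sum_{a\geqslant 1}a^{-\tau}\ll q_k^{1-\tau+\tau/3}=q_k^{1-2\tau/3}.
\]
Because $\tau>3$, the exponent $1-2\tau/3$ is less than $-1$. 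The $q_k$ grow at least geometrically (Fibonacci-type growth from Lemma~\ref{con/fra/lem}(1)), so summing over $k$ converges. The element $q_0=1\in\mathcal Q^\flat$ needs separate care, since then $q_0\mid m$ always. For $1\le|m|<q_1$ we have $\|m\alpha\|\ge\|q_0\alpha\|>1/(2q_1)$, and $q_1$ is a fixed constant, so these finitely many terms are harmless.

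Combining the two cases, the series defining $\psi$ and its formal derivative $2\pi i\sum m\hat h(m)e(mt)/(e(m\alpha)-1)$ both converge absolutely and uniformly. Therefore $\psi\in C^1$, and in particular it is Lipschitz.

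I expect the main obstacle to be the $M_2$ estimate. It is exactly where the threshold $q_{k+1}\le q_k^{\tau/3}$ defining $\mathcal Q^\flat$ must balance the loss $q_{k+1}$ coming from the small divisor $\|q_k\alpha\|$, and I need to check that the exponent bookkeeping really does yield $1-2\tau/3<-1$. There is one fallback. If ``$\tau$-smooth'' only gives $\hat h(m)\ll|m|^{-\tau}$ up to a weaker factor, or only an $\ell^2$ bound, I would settle for Lipschitz via a bounded derivative in $L^\infty$. To get that, I would run the same case analysis with Cauchy--Schwarz on dyadic blocks instead.
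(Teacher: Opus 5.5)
Your proof is correct. The paper does not prove this lemma itself; it quotes it from Wang--Yao. Your argument is the standard one, built from exactly the two ingredients the paper uses in \S4:
\begin{itemize}
\item the bound $\|m\alpha\|\geqslant 1/(2|m|)$ for $m\in M_3$, whose proof indeed uses only Lemma~\ref{con/fra/lem}, and which you rightly restrict to $q_k\nmid m$;
\item the identity $\|aq_k\alpha\|=|a|\,\|q_k\alpha\|>|a|/(2q_{k+1})$ for $m=aq_k\in M_2$.
\end{itemize}
The threshold $q_{k+1}\leqslant q_k^{\tau/3}$ then gives the per-$k$ bound $q_k^{1-2\tau/3}$ with $1-2\tau/3<-1$. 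Together with the finitely many harmless small-$m$ terms, this yields $\sum_{m\in M_2\cup M_3}|m|\,|\hat h(m)|/|e(m\alpha)-1|<\infty$, so $\psi$ is $C^1$ and hence Lipschitz.
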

		
		Using this lemma, we can write
		\begin{align}\label{h1=-}
			&h_{2}'(t)
			=\sum_{m \in {M_2\cup M_3}}\hat{h}(m)\frac{e(m\alpha)-1}{e(m\alpha)-1}e(mt) 
			=
			\psi(t+\alpha)-\psi(t)
		\end{align}
		for any $t\in [0, 1)$.

		Let $T_1: \mathbb{T}^{\omega}\to \mathbb{T}^{\omega}$ be defined by
		\begin{align}\label{t1}
			&T_1(x_1,x_2,\ldots, x_\nu,\ldots) \notag\\
			& \quad =(x_1+\alpha, x_2+\hat{h}(0)+h_{1}'(x_1),
			\ldots, x_\nu+\hat{h}(0)+h_{1}'(x_1+(\nu-2)\beta),\ldots). 
		\end{align} 
		Applying \eqref{t1} repeatedly, we get 
		\begin{align}\label{T1n}
			T_1^n (x_1, x_2, \ldots, x_\nu, \ldots) 
			& =(x_1'(n), x_2'(n), \ldots, x'_\nu(n), \ldots)  
		\end{align} 
		with $x'_1(n)=x_1+n\alpha$
		and, for $\nu\geqslant 2$, 
		\begin{align*}
			x_\nu'(n)=x_\nu+n\hat{h}(0)+\sum^{n-1}_{j=0}h_{1}'(x_1+j\alpha+(\nu-2)\beta). 
		\end{align*}
		Define $\Psi: \mathbb{T}^\omega\to \mathbb{T}^\omega$ by
		\begin{align*}
			\Psi(x_1, x_2, \ldots, x_\nu, \ldots)
			=(x_1, x_2-\psi(x_1), \ldots, x_\nu-\psi(x_1+(\nu-2)\beta), \ldots).
		\end{align*}
		Then $\Psi$ is an invertible Lipschitz function and 
		\begin{align*}
			\Psi^{-1}(x_1, x_2, \ldots, x_\nu, \ldots)
			=(x_1, x_2+\psi(x_1), \ldots, x_\nu+\psi(x_1+(\nu-2)\beta), \ldots).
		\end{align*} 
		Therefore, using \eqref{h=h1+h2} and \eqref{h1=-} we obtain 
		\begin{align*}
			&(\Psi^{-1}\circ T_1\circ \Psi)(x_1, x_2, \ldots, x_\nu, \ldots)\\
			& \quad =(\Psi^{-1}\circ T_1)(x_1, x_2-\psi(x_1), \ldots, x_\nu-\psi(x_1+(\nu-2)\beta), \ldots)\\
			& \quad =\Psi^{-1}(x_1+\alpha, x_2-\psi(x_1)+h_{1}'(x_1)+\hat{h}(0),\ldots, \\
			& \quad\quad\quad x_\nu-\psi(x_1+(\nu-2)\beta)+h_{1}'(x_1+(\nu-2)\beta)+\hat{h}(0), \ldots). 
		\end{align*}
		The last line is 
		\begin{align*}
			& =(x_1+\alpha,x_2+h(x_1), \ldots,x_\nu+h(x_1+(\nu-2)\beta),\ldots) \\ 
			& =T(x_1, x_2, \ldots). 
		\end{align*}
		Then, $T^n=\Psi^{-1}\circ T_1^n\circ \Psi$ for all $n\geqslant 1$. Therefore, for any continuous function
		$f$ on $\mathbb{T}^\omega$ and for any  $x\in \mathbb{T}^\omega$, we have 
		\begin{equation*}\label{fTn}
			f(T^n(x))=(f\circ \Psi^{-1})( T_1^n(\Psi(x))). 
		\end{equation*}
		Since  $\Psi$ is an invertible Lipschitz function, $f \circ \Psi^{-1}$ will run through all the continuous functions
		on $\mathbb{T}^\omega$ when $f$ runs through all the 
		continuous functions on $\mathbb{T}^\omega$. 
		Noting that $\Psi$ is invertible, $\Psi(x)$ runs through $\mathbb{{T}^\omega}$ when $x$ runs through $\mathbb{{T}^\omega}$. Hence, to prove \eqref{M/Rate}            for $T$, it suffices to prove \eqref{M/Rate} for $T_1$.
		
		Noting that the set of trigonometric functions is dense in $C(\mathbb{T^\omega})$,  to prove the theorem, it suffices to show that for any $x\in\mathbb{T}^\omega$ 	and $b \in\mathbb{Z}^\infty$,  we have
		\begin{equation}\label{S'MN}
			S'(M,N)
			:=\sum_{N-M<n\leqslant N}\mu(n)e(\langle b, T_1^n(x)\rangle)
			\ll \frac{M}{\log^{A} N}
		\end{equation} 
		for arbitrary $A>0$. 
		
		In the proof that follows, for any $b\in\mathbb{Z}^\infty$ we denote $b=(b_1,\ldots,b_{\nu'},0,\ldots)$ with $b_\nu\in\mathbb{Z}$ for $1\leqslant\nu\leqslant \nu'=\nu_b'$, $\rho(b):=\sum_{\nu=2}^{\nu'}b_\nu$ and $\|b\|:=\sum_{\nu=2}^{\nu'}|b_\nu|$.
		
		\subsection{Proof of Theorem~\ref{thm2}: proof of \eqref{S'MN}} 
		Applying \eqref{T1n}, we get
		\begin{align}\label{S'MN3}
			S'(M,N)
			& =e\bigg\{\sum_{\nu=1}^{\nu'}b_\nu x_\nu\bigg\}
			\sum_{N-M<n\leqslant N}\mu(n)
			e\bigg\{ b_1n\alpha+n \rho(b)  \hat{h}(0)\notag\\
			& \quad +\sum_{\nu=2}^{\nu'}b_\nu\sum_{j=0}^{n-1}h_{1}'(x_1+j\alpha+(\nu-2)\beta)\bigg\}. 
		\end{align} 
		For $\mathcal{Q}^{\sharp}$ defined in \eqref{Qsp}, write 
		\[
		\mathcal{Q}^{\sharp}=\{\tilde{q_1}, \tilde{q_2}, \ldots\}
		\]
		with $\tilde{q}_k<\tilde{q}_{k+1}$ and $\tilde{q}^+_{k}$  for the next term of $\tilde{q}_k$ in $\mathcal{Q}$. Then  we have
		\[
		2\leqslant \tilde{q}_2<\tilde{q}_2^{\tau/3}< \tilde{q}_3<\tilde{q}_3^{\tau/3}< \tilde{q}_4<\cdots .
		\]
		From the definition of $h_{1}'$ in \eqref{h'nu12}, we obtain
		\begin{align}
			\sum_{j=0}^{n-1}h_{1}'   (x_1+j\alpha +(\nu-2)\beta)
			&=\sum_{k\geqslant 1}
			\sum_{\substack{\tilde{q}_k\leqslant |m|<\tilde{q}_k^+\\\tilde{q}_k|m}}\hat{h}(m)e(m x_1+m(\nu-2)\beta)\frac{e(mn\alpha)-1}{e(m\alpha)-1}.\notag
		\end{align}
		We will split the above sum at $k=K'$ for some proper $K'$ to be chosen. 
		Let $K'=K'_N$ be the positive integer satisfying 
		$$
		\tilde{q}_{K'}<N\leqslant\tilde{q}_{K'}^+,
		$$
		which implies that
		\[
		2^{(\tau /3)^{K'-2}}\leqslant \tilde{q}_2^{(\tau /3)^{K'-2}}<\tilde{q}_{K'}<N.
		\]
		Then, we have $K'\ll \log \log N$. Since $h$ is ${\tau}$-smooth,  
		for $n\leqslant N$ we have  
		\begin{align}\label{SK'}
			S_{\nu,K'}(n):=& \sum_{k\geqslant K'}
			\sum_{\substack{\tilde{q}_k\leqslant |m|<\tilde{q}_k^+\\\tilde{q}_k|m}}\hat{h}(m)e(m x_1+m(\nu-2)\beta)\frac{e(mn\alpha)-1}{e(m\alpha)-1}
			\\
			\ll&  
			\sum_{m\geqslant \tilde{q}_{K'}}|m|^{-\tau}N
			\ll
			N^{2-\tau}.\notag
		\end{align} 
		Noticing that  \eqref{S'MN3} can be written as
		\begin{align*}
			&S'(M,N)
			\\
			&\hskip-1mm \ll \hskip-1mm  
			\sum_{N-M<n\leqslant N}\mu(n)e\hskip-0.5mm
			\bigg\{b_1n\alpha + n \rho(b)  \hat{h}(0)
			\hskip-0.5mm
			+
			\hskip-0.5mm
			\sum_{\nu=2}^{\nu'}b_\nu	\bigg( \sum_{1\leqslant k\leqslant K'}\eta_{\nu,k}(\tilde{q}_kn\alpha)+S_{\nu,K'}(n)\bigg)\bigg\}
			\\
			&\hskip-1mm \ll \hskip-1mm  
			\sum_{N-M<n\leqslant N}\mu(n)e\bigg\{b_1n\alpha+  n \rho(b) \hat{h}(0) + 
			\sum_{\nu=2}^{\nu'}b_\nu 
			\sum_{1\leqslant k\leqslant K'}\eta_{\nu,k}(\tilde{q}_kn\alpha)\bigg\} 
			\cdot
			e\bigg\{\sum_{\nu=2}^{\nu'}b_\nu S_{\nu,K'}(n) \bigg\},
		\end{align*} 
		where
		\begin{align}\label{etak}
			\eta_{\nu,k}(t):=\sum_{r=1}^{\tilde{q}_k^+/\tilde{q}_k}\hat{h}(r\tilde{q}_k)e(r\tilde{q}_kx_1+r\tilde{q}_k(\nu-2)\beta)\frac{e(rt)-1}{e(r\tilde{q}_k\alpha)-1},
		\end{align}
		applying Taylor's formula to the last factor $e\big(\sum_{\nu=2}^{\nu'}b_\nu   S_{\nu,K'}(n)\big)$, using \eqref{SK'} and noting that $\tau>3$, we get		
		\begin{equation}\label{S'MN2}
			S'(M,N)
			\ll  S_1'(M,N)\big( 1+ \|b\|   N^{2-\tau}   \big)
			\ll S_1'(M,N)+
			\|b\|,
		\end{equation}
		where
		\begin{equation} \label{DS1}
			S_1'(M,N):=\sum_{N-M<n\leqslant N}\mu(n)e
			\Big\{ b_1n\alpha+n \rho(b)  \hat{h}(0) +\sum_{1\leqslant k \leqslant K'}\sum_{\nu=2}^{\nu'}b_\nu\eta_{\nu,k}(\tilde{q}_kn\alpha) \Big\}.
		\end{equation} 
		So, to get \eqref{S'MN},  we first need to estimate $S_1'(M,N)$.  
		
		For $1\leqslant k \leqslant K'$, let
		\[
		H_k(t):=e\Big(\sum_{\nu=2}^{\nu'}b_\nu\eta_{\nu,k}(t)\Big).
		\]
		Then $H_k(x)$ is an analytic $1$-periodic function. So we can expand it into the Fourier series
		\[
		H_k(t)=\sum_{l_k\in\mathbb{Z}}c^{(k)}{(l_k)}e(l_kt).
		\]
		Substituting them into \eqref{DS1}, we get
		\begin{align*}
			&S_1'(M,N) \\ 
			&=	\sum_{N-M<n\leqslant N}\mu(n) e\{b_1n\alpha+n \rho(b)  \hat{h}(0)\}
			\prod_{k=1}^{K'}  H_k(\tilde{q}_kn\alpha)\notag\\
			&= \sum_{l_1\in\mathbb{Z}}c^{(1)}{(l_1)}
			\cdots
			\sum_{l_{K'}\in\mathbb{Z}}c^{(K')}{(l_{K'})}
			\sum_{N-M<n\leqslant N}\mu(n)
			e\Big\{n\alpha
			\Big(b_1 + \sum_{k=1}^{K'}l_k\tilde{q}_k\Big)
			+
			n\rho(b)  \hat{h}(0) 
			\Big\}\notag\\
			&\ll\sum_{l_1\in\mathbb{Z}}c^{(1)}{(l_1)}\cdots\sum_{l_{K'}\in\mathbb{Z}}c^{(K')}{(l_{K'})}\notag\\
			&\quad \times \sup_{l_1,\ldots, l_{K'}\in \mathbb{Z}}\sum_{N-M<n\leqslant N}\mu(n)
			e\Big\{n\alpha
			\Big(b_1 + \sum_{k=1}^{K'}l_k\tilde{q}_k\Big)
			+
			n\rho(b) \hat{h}(0) 
			\Big\}. 
		\end{align*}
		Now by Lemma \ref{zhan} the last line is 
		\begin{align*}
			\ll \frac{M}{ \log^{D'} N}  
		\end{align*}
		for any $N^{5/8+\varepsilon} \leqslant M \leqslant N$ and $D'>0$, from which it follows that 
		\begin{align} \label{S1'MN}
			S_1'(M,N)
			\ll 
			\frac{M}{\log^{D'} N}
			\sum_{l_1\in\mathbb{Z}} |c^{(1)}{(l_1)}| \cdots\sum_{l_{K'}\in\mathbb{Z}} |c^{(K')}{(l_{K'})}|.  
		\end{align}
		So, to get \eqref{S'MN}, it suffice to estimate each summation over $l_k\in\mathbb{Z}$ for $k=1, \ldots, K'$.

		Now, we will estimate each summation over $l_k\in\mathbb{Z}$ for $k=1, \ldots, K'$. 
		To use Lemma \ref{bd/coef/W}, we need to calculate the uniform norm of $\eta_{\nu,k}'$ and $\eta_{\nu,k}''$.  
		From the definition of $\eta_{\nu,k}$ in \eqref{etak}, we get
		\begin{align}
			& \bigg\{\sum_{\nu=2}^{\nu'}b_\nu \eta_{\nu,k}(t)\bigg\}'=2\pi i\sum_{\nu=2}^{\nu'}b_\nu\sum_{r=1}^{\tilde{q}_k^+/\tilde{q}_k}r\hat{h}(r\tilde{q}_k)e(r\tilde{q}_kx_1+r\tilde{q}_k(\nu-2)\beta)\frac{e(rt)}{e(r\tilde{q}_k\alpha)-1},\notag \\
			&\bigg\{\sum_{\nu=2}^{\nu'}b_\nu \eta_{\nu,k}(t)\bigg\}''
			=
			-4\pi^2 \sum_{\nu=2}^{\nu'}b_\nu \sum_{r=1}^{\tilde{q}_k^+/\tilde{q}_k}r^2\hat{h}(r\tilde{q}_k)e(r\tilde{q}_kx_1+r\tilde{q}_k(\nu-2)\beta)\frac{e(rt)}{e(r\tilde{q}_k\alpha)-1},\notag
		\end{align}
		where $\tilde{q}_k\in\mathcal{Q}^{\sharp}$. 		
		Noticing that here we have $1\leqslant r\leqslant \tilde{q}_{k}^+/\tilde{q}_k$, similar to \eqref{aqa}, we can also use Lemma \ref{con/fra/lem} (2) to get
		\begin{align*}
			r\|\tilde{q}_k\alpha\|
			<\frac{r}{\tilde{q}_{k}^+}
			\leqslant
			\frac{\tilde{q}_{k}^+}{\tilde{q}_k}\frac{1}{\tilde{q}_{k}^+}
			\leqslant
			\frac{1}{\tilde{q}_{k}}<\frac{1}{2},
		\end{align*}
		which implies that 
		$\|r\tilde{q}_k\alpha\|=r\|\tilde{q}_k\alpha\|$.
		Then
		\begin{align}
			\max\Big(\|
			\big(\sum_{\nu=2}^{\nu'}b_\nu\eta_{\nu,k}\big)'\|_{\infty},\|\big(\sum_{\nu=2}^{\nu'}b_\nu\eta_{\nu,k}\big)''\|_{\infty}\Big)
			&\ll
			4\pi^2 \sum_{\nu=2}^{\nu'} |b_\nu| 
			\sum_{r=1}^{\tilde{q}_k^+/\tilde{q}_k}r^2|\hat{h}(r\tilde{q}_k)|\|r\tilde{q}_k\alpha\|^{-1}\notag\\
			&\ll
			4\pi^2 \|b\| 
			\tilde{q}_k^+            \sum_{r=1}^{\tilde{q}_k^+/\tilde{q}_k}|r  \hat{h}(r\tilde{q}_k)|.\notag 
		\end{align}
		Applying Lemma \ref{bd/coef/W}, we get that
		\[
		\sum_{l_k\in\mathbb{Z}}|c^{(k)}{(l_k)}|
		\ll
		4\cdot 2 \cdot (4\pi^2)^2\bigg( \|b\|  \tilde{q}_k^+\sum_{r=1}^{\tilde{q}_k^+/\tilde{q}_k}|r  \hat{h}(r\tilde{q}_k)|+1\bigg)^2,
		\]
		and then
		\begin{align}	\label{pck}
			\sum_{l_1\in\mathbb{Z}}c^{(1)}{(l_1)}\cdots\sum_{l_{K'}\in\mathbb{Z}}  c^{(k)}{(l_k)}
			&\ll
			\prod_{k=1}^{K'} 2 (8\pi^2)^2  
			\bigg( \|b\| \tilde{q}_k^+\sum_{r=1}^{\tilde{q}_k^+/\tilde{q}_k}|r    \hat{h}(r\tilde{q}_k)|+1\bigg)^2  \notag
			\\
			& 
			\ll
			2^{K'}(8\pi^2)^{2K'}\prod_{k=1}^{K'}
			\bigg(
			\|b\|  \tilde{q}_k^+
			\tilde{q}_k^{-\tau}+1
			\bigg)^2
		\end{align}
		by noting that $\tau > 3$ and the $\tau$-smoothness of $h$ implies $\hat{h}(n) \ll n^{-\tau}$.	
		Since $\tilde{q}_{k}^+ \ll \tilde{q}_k^\tau$, there is a constant
		$C_k > 1$ such that $ \tilde{q}_k^+ 	\tilde{q}_k^{-\tau}\leqslant C_k$. Then 
		\begin{align*}
			\|b\| \tilde{q}_k^+
			\tilde{q}_k^{-\tau}+1
			\leqslant \|b\|C_k+1.
		\end{align*}
		Substituting this into \eqref{pck}, writing $C'=\max\{C_k,1 \leqslant k\leqslant K'\}$, and using 
		$K'\leqslant\log\log N$, we have $C'>1$, $\|b\| \ll \log^{B'} N$ and
		\[
		\sum_{l_1\in\mathbb{Z}}c^{(1)}{(l_1)}\cdots\sum_{l_{K'}\in\mathbb{Z}}  
		c^{(k)}{(l_k)}
		\ll
		2^{K'}(8\pi^2 ( \|b\|C' +1))^{2K'}
		\ll
		\log^{B'} N
		\]
		for some constant $B'>0$ depending on $ b $ and $C'$, thus depending on the trigonometric functions $e(\langle b,\cdot\rangle)$ and $\alpha$. Combining this with \eqref{S1'MN}  and 	taking $D'>B'$, $A = D'-B'$, we see that $A>0$ is arbrtrary and 
		\begin{equation*}\label{S'1MNbound}
			S'_1(M,N)\ll \frac{M}{\log^{D'-B'} N}\ll \frac{M}{\log^{A}N}
		\end{equation*} 
		with the implied constant depending on $A,\alpha, \varepsilon$ and $e(\langle b,\cdot\rangle)$.

		Therefore, \eqref{S'MN2} becomes
		\begin{equation*} 
			S'(M,N) \ll \frac{M}{\log^{A}N}
			+
			\log^{B'} N
			\ll \frac{M}{\log^{A}N}
		\end{equation*}    
		for any  $A>0$, i.e., \eqref{S'MN} is true.

		\subsection{Proof of Theorem~\ref{thm2}:  conclusion}
		Combining the fact that trigonometric functions are dense in $C(\mathbb{T}^\omega)$ with   \eqref{S'MN}, we see that \eqref{M/wo/Rate}  holds for any $f \in C(\mathbb{T}^\omega)$.

		\section{Proof of Theorem \ref{thm3}}
		If $\alpha=0$, then \eqref{smn} becomes
		\[
		S(M,N)
		\ll
		\sum_{N-M<n\leqslant N}
		\mu(n)e\bigg\{n\sum_{\nu =2}^{\nu'}b_\nu h(x_1+(\nu-2)\beta)\bigg\}
		\ll \frac{M}{\log^A N}
		\]
		for any $A>0$, where in the last step we have used Zhan's result in Lemma \ref{zhan}.
		
		Suppose that $0\neq\alpha=l/q$ with $(l, q)=1$.  Divide the last sum over $j$ in \eqref{smn} into residue classes modulo $q$. 
		Since $h$ is $1$-periodic,  
		letting $n\equiv r\bmod{q}$ for some $0\leqslant r\leqslant q-1$, 
		we can write 
		\[
		\sum_{j=0}^{n-1}h(x_1+j\alpha+(\nu-2)\beta)
		=\gamma_{1}
		\bigg(\frac{n-r}{q}+1\bigg)+\gamma_{2}\frac{n-r}{q},
		\]
		where
		\[
		\gamma_{1}=\sum_{j=0}^{r-1}h
		\bigg(x_1+\frac{jl}{q}+(\nu-2)\beta\bigg), 
		\quad 
		\gamma_{2}=\sum_{j=r}^{q-1}h\bigg(x_1+\frac{jl}{q}+(\nu-2)\beta\bigg).
		\]
		Applying Lemma \ref{zhan} again, we get
		\begin{align*}\label{rational}
			S(M, N)
			&\ll \sum_{r=0}^{q-1}\sum_{\substack{N-M<n\leqslant N\\ n\equiv r \bmod q}} 
			\mu(n)e\bigg\{b_1n\alpha+\sum_{\nu=2}^{\nu'} b_{\nu}\bigg(\gamma_{1} \bigg(\frac{n-r}{q}+1\bigg)
			+\gamma_{2}\frac{n-r}{q}\bigg)\bigg\}\notag\\
			&\ll_{A,q,\varepsilon} \frac{M}{\log^A N}
		\end{align*}
		for any $A>0$. 
		This proves Theorem \ref{thm3}.

		\subsection*{Funding} Liu is supported by the National Natural Science 
		Foundation of China (Grant No. 12401012).

	\end{document}